\DeclareFontFamily{U}{euf}{}
\DeclareFontShape{U}{euf}{m}{n}{%
  <5><6><7><8><9>gen*eufm%
  <10><10.95><12><14.4><17.28><20.74><24.88>eufm10%
  }{}
\DeclareFontShape{U}{euf}{b}{n}{%
  <5><6><7><8><9>gen*eufb%
  <10><10.95><12><14.4><17.28><20.74><24.88>eufb10%
  }{}
\DeclareFontFamily{U}{msb}{}
\DeclareFontShape{U}{msb}{m}{n}{%
  <5><6><7><8><9>gen*msbm%
  <10><10.95><12><14.4><17.28><20.74><24.88>msbm10%
  }{}
\DeclareFontFamily{U}{msa}{}
\DeclareFontShape{U}{msa}{m}{n}{%
  <5><6><7><8><9>gen*msam%
  <10><10.95><12><14.4><17.28><20.74><24.88>msam10%
  }{}
\newtheorem{theorem}{Theorem}[section]
\newtheorem{lemma}[theorem]{Lemma}
\newtheorem{proposition}[theorem]{Proposition}
\newtheorem{corollary}[theorem]{Corollary}
\theoremstyle{definition}
\theoremstyle{remark}
\newtheorem{remark}[theorem]{Remark}
\numberwithin{equation}{section}
\begin{document}

\title[On the behavior of $p$-adic Euler $\ell$-functions]
{On the behavior of $p$-adic Euler $\ell$-functions}

\author{Min-Soo Kim}

\address{Department of Mathematics, KAIST, 373-1 Guseong-dong, Yuseong-gu, Daejeon 305-701, South Korea}
\email{minsookim@kaist.ac.kr}

\subjclass[2000]{11B68, 11S80}
\keywords{Euler number and polynomials, $p$-adic integral, $p$-adic Euler $\ell$-function.}


\begin{abstract}
In this paper we propose a construction of $p$-adic Euler $\ell$-function using Kubota-Leopoldt's approach
and Washington's one. We also compute the derivative of $p$-adic Euler $\ell$-function at $s=0$
and the values of $p$-adic Euler $\ell$-function at positives integers.
\end{abstract}

\maketitle

\def\C{\mathbb C_p}
\def\BZ{\mathbb Z}
\def\Z{\mathbb Z_p}
\def\Q{\mathbb Q_p}
\def\ord{\text{ord}_p}

\section{Introduction}\label{intro}

Let $p$ be an odd prime number.
Throughout this paper $\mathbb Z_p, \mathbb Q_p$ and $\mathbb C_p$
will denote the ring of $p$-adic integers, the field of $p$-adic numbers and the completion
of the algebraic closure of $\mathbb Q_p,$ respectively.
Let $v_p$ be the normalized exponential valuation of $\mathbb C_p$ with
$|p|_p=p^{-1}.$
$\mathbb Z, \mathbb Q, \mathbb R$ and $\mathbb C$ denote the ring of (rational) integers, the field of rational numbers,
the field of real numbers and the field of complex numbers, respectively.

The definition of Euler polynomials is well known and
appear in many classical results (cf. \cite{Ca0,MSK,TK1,KKJR,Ts,Yo}).
We consider the following generating function
\begin{equation}\label{def-E}
F(t,x)=\frac{2e^{xt}}{e^t+1}.
\end{equation}
By (\ref{def-E}) it is easy to get
$F(t,x+1)+F(t,x)=2e^{xt},F(t,-x)+F(-t,x)=2e^{-xt},$ and $F(t,1-x)-F(-t,x)=0.$
Expand $F(t,x)$ into a power series of $t:$
\begin{equation}\label{def-E-pow}
F(t,x)=\sum_{n=0}^\infty E_{n}(x)\frac{t^n}{n!}.
\end{equation}
The coefficients $E_n(x),n\geq0,$ are called Euler polynomials.
The Euler numbers $E_n$ can be readily from (\ref{def-E}) that
$E_n(0)=E_n$ (cf. \cite{TK1}).
It is easy to see that Euler polynomials $E_n(x)$ satisfy the identities
\begin{equation}\label{def-E-n-p}
\begin{aligned}
&E_n(x+1)+E_n(x)=2x^n; \\
&E_n(-x)+(-1)^{n}E_n(x)=2(-1)^nx^n; \\
&E_{n}(1-x)=(-1)^nE_n(x),
\end{aligned}
\end{equation}
where $n\geq0.$
Observe that $E_n+(-1)^nE_n=0,E_n=(-1)^nE_n(1),$ $E_n(-1/2)=(-1)^n2^{1-n}$ for $n\geq1,$
and $E_n(1/2)=0$ if $n$ is odd.
This yields the values $E_0=1,E_1=-1/2,E_3=1/4,E_5=-1/2,E_7=17/8,E_9=-31/2,E_{11}=691/4,\ldots,$ with $E_{2k}=0$ for $k=1,2,\ldots.$

For a primitive Dirichlet character $\chi$ with an odd conductor $f=f_\chi,$
the formal power series $F_\chi(t)$ are defined by
\begin{equation}\label{gen-E-numb}
F_\chi(t)=2\sum_{a=1}^{f}\frac{(-1)^a\chi(a)e^{at}}{e^{ft}+1},\quad |t|<\frac\pi f
\end{equation}
(see e.g. \cite{Ca0,Iwa,TK1,KKJR,Yo,Ts,Wa}).
Generalized Euler numbers $E_{n,\chi}$ belong to the Dirichlet character $\chi$ are defined by
\begin{equation}\label{gen-E-numb-d}
F_\chi(t)=\sum_{n=0}^{\infty}E_{n,\chi}\frac{t^n}{n!}.
\end{equation}
Let $\mathbb Q(\chi)$ denote the field generated over $\mathbb Q$ by all the values $\chi(a),a\in\mathbb Z.$
Then it can be shown that $E_{n,\chi}\in\mathbb Q(\chi)$ for each $n\geq0.$
In the complex case, the generating function $F_\chi(t)$ is given by
\begin{equation}\label{power-sum}
\begin{aligned}
F_\chi(t)=2\sum_{a=1}^{f}(-1)^a\chi(a)\sum_{k=0}^\infty(-1)^ke^{(a+fk)t}
=\sum_{n=0}^\infty\left(2\sum_{l=1}^\infty(-1)^l\chi(l)l^n\right)
\frac{t^n}{n!}.
\end{aligned}
\end{equation}
Of course, none of sums above are convergent and the argument is
not rigorous. However, the argument can be made rigorous in the
analytic continuation when the series (\ref{gen-E-numb}) and
(\ref{power-sum}) have a common domain of convergence complex plane (cf. \cite{Be}).
Comparing coefficients of ${t^n}/{n!}$ on both sides of (\ref{gen-E-numb-d}) and (\ref{power-sum}) gives
\begin{equation}\label{power-sum-ell}
E_{n,\chi}=2\sum_{l=1}^\infty(-1)^l\chi(l)l^n
\end{equation}
(see \cite[Theorem 7]{TK1}).

This is used to construct the Euler $\ell$-functions attached to $\chi.$

Let $\chi$ be a primitive Dirichlet character with an odd conductor $f.$
Define the Euler $\ell$-function attached to $\chi$ by
\begin{equation}\label{ell-ft}
\ell_E(s,\chi)=2\sum_{n=1}^\infty\frac{(-1)^n\chi(n)}{n^s},
\end{equation}
where Re$(s)>0$ (cf. \cite{TK1,TK2,KKJR}).
The Euler $\ell$-function attached to $\chi$ can be continued into the complex plane.
It is believed that the analysis of Euler $\ell$-function $\ell_E(s,\chi)$ attached to $\chi$ began with Euler's study of
the zeta function $\zeta(s)$
in which he considered the function only for real values of $s$ (cf. \cite{Ay}).
In particular, if we substitute $\chi=\chi^0,$ the trivial character, in (\ref{ell-ft}), we have put
\begin{equation}\label{E-zeta-ft}
\zeta_E(s)=2\sum_{n=1}^\infty\frac{(-1)^{n}}{n^s}
\end{equation}
(cf. \cite{Ay,TK1,TK2}).
The term $1-2^{1-s}$ is zero only at the isolated points $s_n=1+2\pi in/\log(2)$ for $n$ any integer. So, we
can use $\zeta_E(s)$ to define $\zeta(s)$ on this larger set by defining
\begin{equation}\label{zeta-zeta-E}
\zeta_E(s)=-2(1-2^{1-s})\zeta(s)\quad\text{for }\text{Re}(s)>0, \;s\neq s_n.
\end{equation}
Here, $\zeta(s)$ denotes the Riemann zeta function defined by
$\zeta(s)=\sum_{n=1}^\infty{n^{-s}}$
(see \cite{Ay,Ko,MR,Wa2}).
The function $\zeta_E(s)$ is called the Euler zeta function (cf. \cite{TK1,KKJR}).

Recall that the Dirichlet $L$-function attached to $\chi$ is defined by
\begin{equation}
L(s,\chi)=\sum_{n=1}^\infty\frac{\chi(n)}{n^s},
\end{equation}
for $s\in\mathbb C$ with $\text{Re} (s)>1.$ This function can be continued analytically to the entire complex plane,
except for a simple $s=1$ when $\chi=\chi^0,$
in which case we have the Riemann zeta function, $\zeta(s)=L(s,\chi^0).$
It is known that the values of $L(s,\chi)$ for $s\in\mathbb Z$ with $s\leq0$ are algebraic numbers so that they may be considered
as elements of the field $\overline{\mathbb Q}_p$ (see \cite{Iwa}).

The existence of the $p$-adic analogue of Dirichlet $L$-function was, in fact, proved
by Kubota-Leopoldt \cite{KL}, and they called it the $p$-adic $L$-function for the character $\chi.$
Nowadays, $p$-adic $L$-functions are quite important in number
theory in particular by the works of Iwasawa \cite{Iwa}, and different properties
of these functions have been investigated by various authors, and are strictly related,
by methods and applications, to many works appeared in the recent literature,
where different analytic properties of various type of $p$-adic $L$-functions are investigated
(see \cite{Di2,FG,Fo,Iwa,KS2,TK1,Ko,KL,Sh,Si06,Wa}).

Recently, $p$-adic interpolation functions of Euler numbers have been treated by Tsumura \cite{Ts},
Kim \cite{TK1}, Kim et al. \cite{KKJR}, Young \cite{Yo}.
In \cite{TK1}, Kim showed several properties and derived formulas involving the generalized Euler
numbers with a parameter.
Simsek \cite{Si06} studied twisted $(h,q)$-Bernoulli numbers and polynomials, and $(h,q)$-Euler numbers and polynomials.

The purpose of this paper is to give a construction of $p$-adic Euler $\ell$-function
which interpolates the values
$(1-\chi_n(p)p^n)\ell_E(-n,\chi_n)$
for $n\geq0$
using Kubota-Leopoldt's method \cite{KL} (see Section \ref{con-eu-ell} for definitions).
Let us recall that another construction was done in \cite{KKJR}.
We also compute the derivative of $p$-adic Euler $\ell$-function at $s=0$
and the values of $p$-adic Euler $\ell$-function at positives integers.

\section{Preliminaries}\label{p-adic Euler}

Generalized Euler numbers play fundamental roles in various branches of mathematics
including combinatorics, number theory, special functions and analysis (cf. \cite{Ay,Be,Iwa,Ko,Wa2}).
Now, we present some of the fundamental properties of generalized Euler numbers which are need in the
later sections.

By (\ref{def-E}), (\ref{def-E-pow}), (\ref{gen-E-numb}) and (\ref{gen-E-numb-d}), the generalized Euler numbers
satisfy the relation
\begin{equation}\label{l-e-n}
E_{k,\chi}=f^k\sum_{a=1}^{f}(-1)^a\chi(a)E_{k}\left(\frac{a}f\right).
\end{equation}
In particular, $E_{0,\chi}=\sum_{a=1}^{f}(-1)^a\chi(a)$ for all $\chi.$
From (\ref{power-sum}) and (\ref{ell-ft}) we can deduce the formula
\begin{equation}\label{sum-ell}
2\sum_{n=1}^\infty(-1)^n\chi(n)e^{-nt}=\sum_{k=0}^\infty\ell_E(-k,\chi)\frac{(-t)^k}{k!}
\end{equation}
by considering the vertical line integral
$\frac1{2\pi i}\int_{2-i\infty}^{2+i\infty}t^{-s}\Gamma(s)\ell_E(s,\chi)ds$
and moving the path integration to Re$(s)=-\infty$ (see \cite[Eq.\,(3.9)]{MR}).
The left hand side of (\ref{sum-ell}) can be simplified as follows:
\begin{equation}\label{sum-eu}
\begin{aligned}
2\sum_{n=1}^\infty(-1)^n\chi(n)e^{-nt}&=2\sum_{a=1}^{f}(-1)^a\chi(a)e^{-at}\sum_{k=0}^\infty(-1)^ke^{-fkt}\\
&=\sum_{k=0}^\infty\left(f^k\sum_{a=1}^f(-1)^a\chi(a)E_k\left(1-\frac af\right)\right)\frac{t^k}{k!}.
\end{aligned}
\end{equation}
Using the identity $E_{k}(1-a/f)=(-1)^kE_k(a/f)$ and (\ref{sum-eu}), we observe that
\begin{equation}\label{ell-eu-ne}
\ell_E(-k,\chi)=f^k\sum_{a=1}^{f}(-1)^a\chi(a)E_{k}\left(\frac{a}f\right).
\end{equation}
From (\ref{l-e-n}) we can rewrite (\ref{ell-eu-ne}) as
\begin{equation}\label{ell-ft-val}
\ell_E(-n,\chi)=E_{n,\chi}
\end{equation}
for $n\geq0$ (cf. \cite{TK1,KKJR}).
If $\chi=\chi^0,$ we have
\begin{equation}\label{z-ft-val}
\zeta_E(-n)=E_n
\end{equation}
when $n\geq1.$ Further, if $n=0,$ $\zeta_E(0)=-2(1-1+1-1+\cdots)=-E_0$
(see \cite[p.\,189, (2)]{Ay}).
It is clear from (\ref{gen-E-numb}) that
$F_\chi(-t)=-\chi(-1)F_\chi(t),$ if $\chi\neq\chi^0,$ the trivial character.
Hence
\begin{equation}\label{sym-g-E}
(-1)^{n+1}E_{n,\chi}=\chi(-1)E_{n,\chi},\quad n\geq0.
\end{equation}
In particular we obtain
\begin{equation}\label{sym-E}
E_{n,\chi}=0\quad\text{if }\chi\neq\chi^0,\; n\not\equiv\delta_\chi\pmod2,
\end{equation}
where $\delta_\chi=0$ if $\chi(-1)=-1$ and $\delta_\chi=1$ if $\chi(-1)=1.$
However, since
$F_{\chi^0}(t)=-2e^t/(e^t+1),$
we easily see that
$E_{n,\chi^0}=E_n$ if $n$ odd and 0 if $n$ even,
and $E_{0,\chi^0}=-E_0=-1.$
From (\ref{ell-ft-val}) and (\ref{sym-g-E}), we may deduce that
\begin{equation}\label{sym-eq-l}
\ell_{E}(-n,\chi)=(-1)^{n+1}\chi(-1)\ell_{E}(-n,\chi),\quad n\geq0.
\end{equation}
For $n\geq0$ therefore, $\ell_{E}(-n,\chi)\neq0$ if and only if $\chi(-1)=(-1)^{n+1}.$
That is, $\ell_{E}(-n,\chi)\neq0$ if and only if $\chi$ and $n+1$ have the same parity.

Let $\mathbb Q_p(\chi)$ denote the field generated over $\mathbb Q_p$ by $\chi(a),a\in\mathbb Z$
(in an algebraic closure of $\mathbb Q_p$).
$\mathbb Q_p(\chi)$ is a locally compact topological field containing $\mathbb Q(\chi)$ as a dense subfield.
We can state the following lemma.

\begin{lemma}\label{pro-ge-E}
If $n\in\mathbb Z, n\geq0,$ then there exist a Witt's formula of $E_{n,\chi}$
in $\mathbb Q_p(\chi)$ such that
$$E_{n,\chi}=\lim_{N\to\infty}\sum_{a=1}^{fp^N}(-1)^a\chi(a)a^n.$$
Herein as usual we set $\chi(a)=0$ if $a$ is not prime to the conductor $f.$
\end{lemma}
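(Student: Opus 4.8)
The plan is to reduce the infinite $p$-adic limit to a single finite generating-function identity and then control the resulting error term by its $p$-adic valuation. First I would split each $a$ in the range $1\le a\le fp^N$ uniquely as $a=a_0+fm$ with $1\le a_0\le f$ and $0\le m\le p^N-1$. Since both $f$ and $p$ are odd, $(-1)^a=(-1)^{a_0}(-1)^m$ and $\chi(a)=\chi(a_0)$, so the partial exponential sum factors as
\[
\sum_{a=1}^{fp^N}(-1)^a\chi(a)e^{at}=\Bigl(\sum_{a_0=1}^{f}(-1)^{a_0}\chi(a_0)e^{a_0t}\Bigr)\sum_{m=0}^{p^N-1}(-1)^me^{fmt}.
\]
Evaluating the finite alternating geometric sum exactly (using that $p^N$ is odd, so $(-1)^{p^N}=-1$) gives $\sum_{m=0}^{p^N-1}(-1)^me^{fmt}=(1+e^{fp^Nt})/(1+e^{ft})$, and comparing with the definition \eqref{gen-E-numb} of $F_\chi(t)$ I obtain the closed form
\[
2\sum_{a=1}^{fp^N}(-1)^a\chi(a)e^{at}=(1+e^{fp^Nt})F_\chi(t)
\]
as an identity of formal power series in $t$ (the denominator $1+e^{ft}$ is invertible in the power series ring, having constant term $2$).

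Next I would read off the coefficient of $t^n/n!$ on each side. The left side contributes $2\sum_{a=1}^{fp^N}(-1)^a\chi(a)a^n$, while expanding $1+e^{fp^Nt}=2+\sum_{j\ge1}(fp^N)^jt^j/j!$ against $F_\chi(t)=\sum_{k\ge0}E_{k,\chi}t^k/k!$ from \eqref{gen-E-numb-d} yields $2E_{n,\chi}+\sum_{j=1}^{n}\binom{n}{j}(fp^N)^jE_{n-j,\chi}$. Dividing by $2$, which is a $p$-adic unit since $p$ is odd, gives the exact finite identity
\[
\sum_{a=1}^{fp^N}(-1)^a\chi(a)a^n=E_{n,\chi}+\frac12\sum_{j=1}^{n}\binom{n}{j}(fp^N)^jE_{n-j,\chi}.
\]

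Finally I would let $N\to\infty$ in $\mathbb Q_p(\chi)$. Each term of the error sum carries the factor $(fp^N)^j=f^jp^{Nj}$ with $j\ge1$, so its valuation is at least $Nj\ge N$; since the finitely many numbers $E_{n-j,\chi}$, the binomial coefficients, and $f^j$ all have valuation bounded below independently of $N$, the whole error term has valuation tending to $+\infty$ and hence converges to $0$. This proves $\lim_{N\to\infty}\sum_{a=1}^{fp^N}(-1)^a\chi(a)a^n=E_{n,\chi}$. I expect the only delicate point to be the bookkeeping in the first step, namely ensuring that the parity factors and the reindexing $a=a_0+fm$ combine so that the geometric sum telescopes against $1+e^{ft}$ to produce exactly $1+e^{fp^Nt}$; once this closed form is in hand, the coefficient extraction and the valuation estimate are entirely routine.
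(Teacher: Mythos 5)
Your proof is correct, and it shares its skeleton with the paper's: both decompose $a=a_0+fm$, use that $f$ and $p^N$ are odd to factor out $(-1)^{a_0}(-1)^m$, and telescope the alternating geometric sum against $1+e^{ft}$. The difference lies in how the limit is handled. The paper works analytically: it treats the sums as $p$-adic analytic functions of $t$ on $|t|_p<|p|_p^{1/(p-1)}$, uses $e^{p^Nt}\to 1$ to conclude $\lim_{N\to\infty}\sum_{a=1}^{fp^N}(-1)^a\chi(a)e^{ta}=F_\chi(t)$ as an identity of functions (its \eqref{Eu-witt-gen} and \eqref{Eu-witt}), and then ``compares coefficients of $t^n/n!$'' --- a step that tacitly interchanges the limit in $N$ with coefficient extraction, which in the $p$-adic setting needs a uniform-convergence argument of the kind found in Koblitz. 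You instead keep $N$ finite, derive the exact formal-power-series identity $2\sum_{a=1}^{fp^N}(-1)^a\chi(a)e^{at}=(1+e^{fp^Nt})F_\chi(t)$, extract the $t^n/n!$ coefficient while everything is still finite to get $\sum_{a=1}^{fp^N}(-1)^a\chi(a)a^n=E_{n,\chi}+\tfrac12\sum_{j=1}^{n}\binom{n}{j}(fp^N)^jE_{n-j,\chi}$, and kill the error term by the explicit valuation bound $v_p\geq N-C$ with $C$ depending only on $n$, $f$, $\chi$. This buys you two things the paper's version does not supply: the interchange of limit and coefficient extraction becomes unnecessary (so the one genuinely delicate analytic point disappears), and you get an effective rate of convergence, namely that the $N$-th partial sum agrees with $E_{n,\chi}$ modulo $p^{N-C}$ --- which is in fact the kind of estimate implicitly used later in the paper (e.g.\ in \eqref{diff-Euler} and Lemma \ref{c_n}). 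The paper's analytic phrasing is shorter and matches the classical Witt-formula template, but your finite-identity-plus-error-estimate route is the more self-contained and elementary of the two.
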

\begin{proof}
For $t\in\overline{\mathbb Q}_p$ with $|t|_p<|p|_p^{\frac1{p-1}},$ we have
$$\lim_{N\to\infty}\sum_{b=0}^{p^N-1}(-1)^be^{bt}=\lim_{N\to\infty}\frac{1+e^{p^Nt}}{1+e^t}$$
(see \cite{Ko}).
As $\lim_{N\to\infty}e^{p^Nt}=1,$ we obtain
\begin{equation}\label{Eu-witt-gen}
\lim_{N\to\infty}\sum_{b=0}^{p^N-1}(-1)^be^{bt}=\frac{2}{1+e^t},
\end{equation}
so
\begin{equation}\label{Eu-witt}
\begin{aligned}
\lim_{N\to\infty}\sum_{a=1}^{fp^N}(-1)^a\chi(a)e^{ta}
&=\lim_{N\to\infty}\sum_{a=1}^f(-1)^a\chi(a)\sum_{b=0}^{p^N-1}(-1)^b(e^{tf})^be^{at} \\
&=F_\chi(t),
\end{aligned}
\end{equation}
where $\chi$ is a primitive Dirichlet character with an odd conductor $f=f_\chi.$
Using (\ref{gen-E-numb}) and comparing the coefficients of $t^n/n!$ on both sides of (\ref{Eu-witt}),
we obtain the result.
\end{proof}

\begin{corollary}\label{pro-ge-E-coro}
In $\mathbb Q_p(\chi),$
$$(1-\chi(p)p^n)E_{n,\chi}
=\lim_{N\to\infty}\sum_{\substack{a=1\\ p\nmid a}}^{fp^N}(-1)^a\chi(a)a^n.$$
\end{corollary}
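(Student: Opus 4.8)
The plan is to start from the Witt-type formula of Lemma \ref{pro-ge-E}, namely
$$E_{n,\chi}=\lim_{N\to\infty}\sum_{a=1}^{fp^N}(-1)^a\chi(a)a^n,$$
and to split the range of summation according to whether $p\nmid a$ or $p\mid a$. The first family of terms is exactly the sum appearing on the right-hand side of the corollary, so the whole point is to evaluate the contribution of the terms with $p\mid a$ and show that it accounts for the factor $\chi(p)p^n E_{n,\chi}$.

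First I would write, for each fixed $N$,
$$\sum_{a=1}^{fp^N}(-1)^a\chi(a)a^n=\sum_{\substack{a=1\\ p\nmid a}}^{fp^N}(-1)^a\chi(a)a^n+\sum_{\substack{a=1\\ p\mid a}}^{fp^N}(-1)^a\chi(a)a^n.$$
In the second sum I would substitute $a=pb$, so that $b$ runs from $1$ to $fp^{N-1}$. The key elementary observations are that $p$ is odd, whence $(-1)^{pb}=(-1)^b$; that $\chi$ is completely multiplicative on integers prime to $f$, whence $\chi(pb)=\chi(p)\chi(b)$ (and both sides vanish consistently when $\gcd(b,f)>1$, using the convention $\chi(a)=0$ for $a$ not prime to $f$); and that $(pb)^n=p^n b^n$. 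Putting these together gives
$$\sum_{\substack{a=1\\ p\mid a}}^{fp^N}(-1)^a\chi(a)a^n=\chi(p)p^n\sum_{b=1}^{fp^{N-1}}(-1)^b\chi(b)b^n.$$

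Next I would pass to the limit $N\to\infty$. By Lemma \ref{pro-ge-E}, the reindexed sum $\sum_{b=1}^{fp^{N-1}}(-1)^b\chi(b)b^n$ converges to $E_{n,\chi}$ (replacing $N$ by $N-1$ in the lemma changes nothing in the limit), and the full sum on the left converges to $E_{n,\chi}$ as well. Since these two limits exist in $\mathbb Q_p(\chi)$, the limit of the remaining restricted sum over $p\nmid a$ also exists, and taking limits throughout yields
$$E_{n,\chi}=\lim_{N\to\infty}\sum_{\substack{a=1\\ p\nmid a}}^{fp^N}(-1)^a\chi(a)a^n+\chi(p)p^n E_{n,\chi}.$$
Rearranging gives precisely $(1-\chi(p)p^n)E_{n,\chi}=\lim_{N\to\infty}\sum_{p\nmid a}(-1)^a\chi(a)a^n$, as claimed. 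The only point requiring any care is the interchange of the splitting with the limit; this is harmless here because the limits of the total sum and of the $p$-divisible part both exist by the lemma, so no independent convergence argument for the prime-to-$p$ part is needed. I expect the step $(-1)^{pb}=(-1)^b$ (which rests on $p$ being odd, an assumption fixed at the start of the paper) to be the one most easily overlooked but otherwise the argument is entirely routine.
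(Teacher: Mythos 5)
Your proof is correct and follows exactly the route the paper intends: the corollary is stated as an immediate consequence of Lemma \ref{pro-ge-E}, and the paper itself performs this identical splitting-and-reindexing computation (full sum minus the $a\mapsto ap$ terms, using that $p$ is odd so $(-1)^{ap}=(-1)^a$) in the display establishing (\ref{p-l-n-values}). Your attention to the convergence bookkeeping and to the degenerate case $p\mid f$ (where $\chi(p)=0$ keeps both sides consistent) is sound, so nothing is missing.
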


We define the difference operator $\Delta_c$ by the forward difference operator
\begin{equation}\label{diff}
\Delta_ca_n=a_{n+c}-a_n.
\end{equation}
Repeated application of this operator can be expressed in the form
\begin{equation}\label{diff-o}
\Delta_c^k a_n=\sum_{j=0}^k\binom kj (-1)^{k-j}a_{n+jc}
\end{equation}
for some positive $k\in\mathbb Z$ (see \cite{Yo}).
An application of (\ref{diff-o}) to the sequence $\{E_{n,\chi}\}$ yields
\begin{equation}\label{diff-Euler}
\Delta_{p-1}^kE_{n,\chi}=\lim_{N\to\infty}\sum_{a=1}^{fp^N}(-1)^a\chi(a)a^n(a^{p-1}-1)^k,
\end{equation}
where $n,k\geq1.$ Therefore, we obtain the congruence in the field $\mathbb Q_p(\chi):$
\begin{equation}\label{diff-Euler-cong}
\Delta_{p-1}^kE_{n,\chi}\equiv0\pmod{p^k}
\end{equation}
for all natural numbers $n$ such that $n\geq k.$
This is an analogue to Kummer's congruences for the ordinary Bernoulli numbers $B_n.$
Therefore we obtain the following theorem.

\begin{theorem}\label{diff-o-thm}
If $n$ be the natural numbers such that $n\geq k.$ Then
$$\Delta_{p-1}^kE_{n,\chi}\equiv0\pmod{p^{k}}.$$
\end{theorem}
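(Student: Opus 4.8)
The plan is to read the result directly off the explicit Witt-type expression already recorded in equation (\ref{diff-Euler}) and then to perform a term-by-term $p$-adic estimate. First I would start from Lemma \ref{pro-ge-E}, which gives
$$E_{n,\chi}=\lim_{N\to\infty}\sum_{a=1}^{fp^N}(-1)^a\chi(a)a^n,$$
and observe that applying $\Delta_{p-1}$ in the index $n$ simply multiplies the $a$-th summand by the factor $(a^{p-1}-1)$; iterating $k$ times (or invoking the expansion (\ref{diff-o})) yields exactly the identity (\ref{diff-Euler}),
$$\Delta_{p-1}^kE_{n,\chi}=\lim_{N\to\infty}\sum_{a=1}^{fp^N}(-1)^a\chi(a)a^n(a^{p-1}-1)^k.$$
Since each value $\chi(a)$ lies in the ring of integers $\mathcal O$ of $\mathbb Q_p(\chi)$, every summand belongs to $\mathcal O$, and it suffices to show that each summand lies in $p^k\mathcal O$; the closedness of $p^k\mathcal O$ then transfers the congruence to the $p$-adic limit.

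Second, I would split the finite sum according to whether $p\mid a$. When $p\nmid a$, Fermat's little theorem gives $a^{p-1}\equiv1\pmod p$, hence $(a^{p-1}-1)^k\equiv0\pmod{p^k}$, so that summand already lies in $p^k\mathcal O$ irrespective of $n$. When $p\mid a$, we have $p^n\mid a^n$, and the hypothesis $n\geq k$ forces $a^n\equiv0\pmod{p^k}$; thus that summand also lies in $p^k\mathcal O$. Combining the two cases shows that every partial sum $\sum_{a=1}^{fp^N}(-1)^a\chi(a)a^n(a^{p-1}-1)^k$ is divisible by $p^k$, which is precisely the congruence (\ref{diff-Euler-cong}); passing to the limit then gives the stated result.

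The argument is short because the substantive work has already been carried out in establishing Lemma \ref{pro-ge-E} and the identity (\ref{diff-Euler}). The only genuinely delicate point will be the case $p\mid a$, where the factor $a^{p-1}-1$ is a $p$-adic unit and contributes no divisibility; here one must instead exploit the divisibility coming from $a^n$, and it is exactly at this step that the hypothesis $n\geq k$ is indispensable. A secondary technical check is that divisibility by $p^k$ survives the $p$-adic limit, which follows from $p^k\mathcal O$ being closed in $\mathcal O$.
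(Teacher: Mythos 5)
Your proposal is correct and follows essentially the same route as the paper, which also derives the identity $\Delta_{p-1}^kE_{n,\chi}=\lim_{N\to\infty}\sum_{a=1}^{fp^N}(-1)^a\chi(a)a^n(a^{p-1}-1)^k$ from Lemma \ref{pro-ge-E} and then reads off the congruence (\ref{diff-Euler-cong}). In fact you make explicit a point the paper leaves implicit, namely the case split between $p\nmid a$ (where Fermat's little theorem supplies $p^k\mid(a^{p-1}-1)^k$) and $p\mid a$ (where the hypothesis $n\geq k$ supplies $p^k\mid a^n$), together with the observation that divisibility survives the $p$-adic limit because $p^k\mathcal O$ is closed.
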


\begin{remark}\label{diff-o-remark}
Shiratani \cite[Eq. (10)]{Sh} has noted that
$$\Delta_{p-1}^k\frac1n B_{n}\equiv0\pmod{p^{k}}$$
for all natural numbers $n$ such that $n\geq k+1.$
This congruence is the most well-known formula of Kummer (cf. \cite{Ha,Yo}).
\end{remark}

\section{Construction of $p$-adic Euler $\ell$-functions}\label{con-eu-ell}

In this section, we investigate several interesting
properties of $p$-adic Euler $\ell$-functions interpolates the generalized Euler numbers
in methods similar to \cite[$\S$3]{Iwa} and \cite[$\S$5.3]{Wa2}.

For each $a\in\mathbb Z_p$ with $p\nmid a,$ $a$ can be uniquely written in the form
\begin{equation}\label{ome}
\langle a\rangle=\omega^{-1}(a)a,
\end{equation}
where $\omega$ is the Teichm\"uller character.
Then we have $\langle a\rangle\equiv 1\pmod{p\mathbb Z_p}.$
For $p>2,$ $\lim_{n\to\infty}a^{p^n}=\omega(a)$ (see \cite{Wa2}).

Let $\chi$ be the Dirichlet character with an odd conductor $f=f_\chi.$ For $n\geq1,$ we define $\chi_n$
to be the primitive character associated with the character
$$\chi_n:(\mathbb Z/\text{l.c.m.}(f,p)\mathbb Z)^\times\rightarrow\mathbb C^\times$$
defined by $\chi_n(a)=\chi(a)\omega^{-n}(a).$
Let $\Q(\chi)$ be the field generated over $\Q$ by the values $\chi(a),a\in\mathbb Z.$
We define a sequence of elements $\epsilon_{n,\chi},n\geq0,$ in $\Q(\chi)$ by
\begin{equation}\label{epsi-eu}
\epsilon_{n,\chi}=(1-\chi_n(p)p^n)E_{n,\chi_n},
\end{equation}
where $E_{n,\chi_n}$ is the generalized Euler number defined in (\ref{gen-E-numb-d}).
Note that $\chi_n(a)$ is in $\Q(\chi)$ for any $n\geq0$ and $a\in\mathbb Z.$

We put
$$\mathcal D=\left\{s\in\C\,:\, |s|_p<|p|_p^{-(p-2)/(p-1)}\right\}.$$
Now we define an interpolation function $\ell_{p,E}(s,\chi)$ for generalized Euler numbers over
$\mathcal D$ by
\begin{equation}\label{p-l-fts}
\ell_{p,E}(s,\chi)=
\lim_{N\to\infty}\sum_{\substack{a=1\\ p\nmid a}}^{fp^N}(-1)^a\chi(a)
\langle a\rangle^{1-s}
\end{equation}
(cf. \cite{Iwa,KS2,Wa,Wa2}).
The $p$-adic function $\ell_{p,E}(s,\chi)$ will be called the $p$-adic Euler $\ell$-function.
From Lemma \ref{pro-ge-E} and (\ref{ome}) we easily see that
\begin{equation}\label{p-l-n-values}
\begin{aligned}
\ell_{p,E}(1-n,\chi)&=\lim_{N\to\infty}\sum_{\substack{a=1\\ p\nmid a}}^{fp^N}(-1)^a\chi(a)
\langle a\rangle^{n} \\
&=\lim_{N\to\infty}\left(\sum_{a=1}^{fp^N}(-1)^a\chi(a) \langle a\rangle^{n}
-\sum_{a=1}^{fp^{N-1}}(-1)^{ap}\chi(ap)\langle ap\rangle^{n}\right) \\
&=\epsilon_{n,\chi},\quad n\geq1.
\end{aligned}
\end{equation}

\begin{theorem}\label{Kum3}
Let $n,c,k$ be positive integers with $c\equiv0\pmod{p-1},p>2.$ Then
$$\Delta_c^k\epsilon_{n,\chi}\equiv0\pmod{p^{k}\mathbb Z_p[\chi]}.$$
\end{theorem}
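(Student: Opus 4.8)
The plan is to deduce the congruence directly from the limit formula (\ref{p-l-n-values}), which already exhibits $\epsilon_{n,\chi}$ as a $p$-adic limit of Euler-type sums, and then to differentiate termwise. First I would record that, by (\ref{p-l-n-values}),
\[
\epsilon_{n,\chi}=\lim_{N\to\infty}\sum_{\substack{a=1\\ p\nmid a}}^{fp^N}(-1)^a\chi(a)\langle a\rangle^{n},
\]
and that, since $\Delta_c^k$ is by (\ref{diff-o}) a fixed finite $\mathbb{Z}$-linear combination of the shifts $n\mapsto n+jc$, it commutes both with the limit in $N$ and with the finite summation over $a$. Hence
\[
\Delta_c^k\epsilon_{n,\chi}=\lim_{N\to\infty}\sum_{\substack{a=1\\ p\nmid a}}^{fp^N}(-1)^a\chi(a)\,\Delta_c^k\bigl(\langle a\rangle^{n}\bigr).
\]

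The key algebraic step is the binomial collapse of the inner difference. With $a$ fixed, $\langle a\rangle^{n}$ is a geometric progression in the shift parameter, so by (\ref{diff-o})
\[
\Delta_c^k\bigl(\langle a\rangle^{n}\bigr)=\sum_{j=0}^{k}\binom{k}{j}(-1)^{k-j}\langle a\rangle^{n+jc}=\langle a\rangle^{n}\bigl(\langle a\rangle^{c}-1\bigr)^{k}.
\]
This is precisely the mechanism that produced the factor $(a^{p-1}-1)^k$ in (\ref{diff-Euler}) and Theorem \ref{diff-o-thm}. The hypothesis $c\equiv0\pmod{p-1}$ serves to make the Teichm\"uller twists $\chi_{n+jc}=\chi\omega^{-(n+jc)}$ all coincide with $\chi_n$, so that $\Delta_c^k\epsilon_{n,\chi}$ is a genuine Kummer-type congruence attached to the single character $\chi_n$; one should note, however, that the integrality estimate carried out below is in fact uniform in $c$, since it rests only on $\langle a\rangle\equiv1\pmod{p}$.

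Next I would supply the $p$-adic estimate. By (\ref{ome}) one has $\langle a\rangle\equiv1\pmod{p\mathbb{Z}_p}$ for every $a$ with $p\nmid a$, whence $\langle a\rangle^{c}-1\in p\mathbb{Z}_p$ and therefore $(\langle a\rangle^{c}-1)^{k}\in p^{k}\mathbb{Z}_p$. Since each factor $(-1)^a\chi(a)\langle a\rangle^{n}$ lies in $\mathbb{Z}_p[\chi]$, every summand, and hence every partial sum over $a$, belongs to $p^{k}\mathbb{Z}_p[\chi]$.

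Finally, the only point that genuinely requires care is the passage to the limit: I would invoke the fact that $p^{k}\mathbb{Z}_p[\chi]$ is a closed subset of the complete ring $\mathbb{Z}_p[\chi]$, so that a convergent sequence with all terms in $p^{k}\mathbb{Z}_p[\chi]$ has its limit there as well. Applying this to the partial sums indexed by $N$ gives $\Delta_c^k\epsilon_{n,\chi}\in p^{k}\mathbb{Z}_p[\chi]$, which is the assertion. The main obstacle is thus not the computation but this closure argument, together with the underlying fact---supplied by Lemma \ref{pro-ge-E} and Corollary \ref{pro-ge-E-coro}---that the limit (\ref{p-l-n-values}) already converges in the integral ring $\mathbb{Z}_p[\chi]$ rather than merely in the field $\mathbb{Q}_p(\chi)$.
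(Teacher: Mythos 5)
Your proof is correct and takes essentially the same route as the paper's: both collapse $\Delta_c^k$ via the binomial identity into $\lim_{N\to\infty}\sum_{p\nmid a}(-1)^a\chi(a)(\langle a\rangle^{c}-1)^k\langle a\rangle^{n}$ and then extract the factor $p^k$ from $\langle a\rangle\equiv1\pmod{p\mathbb{Z}_p}$. Your two additions---spelling out the closedness of $p^k\mathbb{Z}_p[\chi]$ when passing to the limit, and noting that the estimate is uniform in $c$ (the paper's appeal to Euler's theorem is superfluous once one works with $\langle a\rangle$ rather than $a$)---are refinements of the same argument, not a different one.
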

\begin{proof} From (\ref{diff-o}) and (\ref{p-l-n-values}), we note that
$$\begin{aligned}
\Delta_c^k\epsilon_{n,\chi}&=\Delta_c^k\ell_{p,E}(1-n,\chi) \\
&=\sum_{j=0}^k\binom kj (-1)^{k-j}\lim_{N\to\infty}\sum_{\substack{a=1\\ p\nmid a}}^{fp^N}(-1)^a\chi(a)
\langle a\rangle^{n+jc} \\
&=\lim_{N\to\infty}\sum_{\substack{a=1\\ p\nmid a}}^{fp^N}(-1)^a\chi(a)
(\langle a\rangle^{c}-1)^k\langle a\rangle^n.
\end{aligned}$$
If $c\equiv0\pmod{p-1},$ then since $\langle a\rangle\in\mathbb Z_p^\times,$ we have
$$(\langle a\rangle^{c}-1)^k\equiv0\pmod{p^{k}\mathbb Z_p}$$
by Euler's Theorem. We thus conclude that
$\Delta_c^k\epsilon_{n,\chi}\equiv0\pmod{p^{k}\mathbb Z_p[\chi]}.$
\end{proof}

\begin{corollary}
Let $n\equiv n^\prime\pmod{p-1}$ and $c\equiv0\pmod{p-1}.$ Then
$$\Delta_c^k\epsilon_{n,\chi}\equiv\Delta_c^k\epsilon_{n^\prime,\chi}\pmod{p^{k+1}\mathbb Z_p[\chi]}.$$
\end{corollary}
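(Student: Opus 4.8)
The plan is to build directly on the intermediate identity established inside the proof of Theorem \ref{Kum3}. There it is shown that
$$\Delta_c^k\epsilon_{n,\chi}=\lim_{N\to\infty}\sum_{\substack{a=1\\ p\nmid a}}^{fp^N}(-1)^a\chi(a)(\langle a\rangle^{c}-1)^k\langle a\rangle^n,$$
and the same computation with $n$ replaced by $n^\prime$ gives the analogous formula for $\Delta_c^k\epsilon_{n^\prime,\chi}$. Subtracting the two expressions and collecting the common factor $(\langle a\rangle^{c}-1)^k$ in each summand, I would first record the identity
$$\Delta_c^k\epsilon_{n,\chi}-\Delta_c^k\epsilon_{n^\prime,\chi}=\lim_{N\to\infty}\sum_{\substack{a=1\\ p\nmid a}}^{fp^N}(-1)^a\chi(a)(\langle a\rangle^{c}-1)^k\left(\langle a\rangle^{n}-\langle a\rangle^{n^\prime}\right).$$
The whole argument then reduces to bounding the $p$-adic size of the factor $(\langle a\rangle^{c}-1)^k\left(\langle a\rangle^{n}-\langle a\rangle^{n^\prime}\right)$ uniformly in $a$.

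For the first factor I would reuse verbatim the estimate from Theorem \ref{Kum3}: since $c\equiv0\pmod{p-1}$ and $\langle a\rangle\in\mathbb{Z}_p^\times$, Euler's theorem yields $\langle a\rangle^{c}\equiv1\pmod{p\mathbb{Z}_p}$, hence $(\langle a\rangle^{c}-1)^k\equiv0\pmod{p^{k}\mathbb{Z}_p}$. For the second factor I would invoke the normalization $\langle a\rangle\equiv1\pmod{p\mathbb{Z}_p}$ recorded just after (\ref{ome}): assuming without loss of generality $n\geq n^\prime$ and writing $\langle a\rangle^{n}-\langle a\rangle^{n^\prime}=\langle a\rangle^{n^\prime}\left(\langle a\rangle^{n-n^\prime}-1\right)$, the hypothesis $n\equiv n^\prime\pmod{p-1}$ makes $n-n^\prime$ a nonnegative integer, and $\langle a\rangle\equiv1\pmod{p\mathbb{Z}_p}$ forces $\langle a\rangle^{n-n^\prime}-1\equiv0\pmod{p\mathbb{Z}_p}$; since $\langle a\rangle^{n^\prime}\in\mathbb{Z}_p^\times$ this gives $\langle a\rangle^{n}-\langle a\rangle^{n^\prime}\equiv0\pmod{p\mathbb{Z}_p}$. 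Multiplying the two congruences, and noting that $(-1)^a\chi(a)\in\mathbb{Z}_p[\chi]$, I conclude that every summand lies in $p^{k+1}\mathbb{Z}_p[\chi]$.

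Finally I would pass to the limit: each finite partial sum is an element of the set $p^{k+1}\mathbb{Z}_p[\chi]$, which is closed in $\mathbb{Q}_p(\chi)$, so the limit as $N\to\infty$ remains in $p^{k+1}\mathbb{Z}_p[\chi]$, yielding $\Delta_c^k\epsilon_{n,\chi}\equiv\Delta_c^k\epsilon_{n^\prime,\chi}\pmod{p^{k+1}\mathbb{Z}_p[\chi]}$. I do not expect a genuine obstacle here, as the estimate is a clean factorization into the two congruences above. The only point deserving care is the bookkeeping of the single extra power of $p$: it comes entirely from $\langle a\rangle\equiv1\pmod{p\mathbb{Z}_p}$ applied to $\langle a\rangle^{n}-\langle a\rangle^{n^\prime}$, so this method produces exactly the exponent $k+1$, consistent with the Kummer-type improvement of Theorem \ref{Kum3}.
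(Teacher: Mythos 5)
Your proof is correct and takes essentially the same route as the paper: the paper likewise subtracts the two limit expressions from the proof of Theorem \ref{Kum3}, writes the summand as $(\langle a\rangle^{c}-1)^k(\langle a\rangle^{n-n'}-1)\langle a\rangle^{n'}$, draws the factor $p^k$ from Euler's theorem and the extra factor $p$ from $\langle a\rangle^{n-n'}\equiv1\pmod{p\mathbb Z_p}$. Your write-up only makes the factorization and the limit step explicit (one quibble: it is the WLOG $n\geq n'$, not the congruence $n\equiv n'\pmod{p-1}$, that makes $n-n'$ nonnegative), so the two arguments coincide.
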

\begin{proof}
Note that
$$\Delta_c^k\epsilon_{n,\chi}-\Delta_c^k\epsilon_{n',\chi}
=\lim_{N\to\infty}\sum_{\substack{a=1\\ p\nmid a}}^{fp^N}(-1)^a\chi(a)
(\langle a\rangle^{c}-1)^k(\langle a\rangle^{n-n'}-1)\langle a\rangle^{n'}.$$
If $n-n'\equiv0\pmod{p-1},$ then we have $\langle a\rangle^{n-n'}\equiv1\pmod{p\Z}.$
Thus the result follows from Theorem \ref{Kum3}.
\end{proof}

\begin{lemma}[{\cite[p.\,19, Lemma 1]{Iwa}}]\label{uniq}
If $A(x),B(x)\in\mathbb Q_p(\chi)[[x]],$ convergent in a neighborhood of 0 in $\overline{\mathbb Q}_p$
and $A(\xi_n)=B(\xi_n)$ for a sequence of elements $\xi_n\neq0,n\geq0,$ in $\overline{\mathbb Q}_p$ such that
$\lim_{n\to0}\xi_n=0,$ then $A(x)=B(x).$
\end{lemma}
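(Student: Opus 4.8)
The statement to prove is Lemma~\ref{uniq}, a uniqueness principle for convergent power series over $\mathbb{Q}_p(\chi)$ that agree on a sequence of nonzero points accumulating at $0$. The plan is to reduce the claim to the familiar fact that a nonzero convergent power series has isolated zeros near the center of convergence. Set $C(x)=A(x)-B(x)$, which again lies in $\mathbb{Q}_p(\chi)[[x]]$ and converges in some disk $|x|_p<r$ about $0$. By hypothesis $C(\xi_n)=0$ for all $n$, with $\xi_n\neq 0$ and $\xi_n\to 0$. I want to conclude $C\equiv 0$, i.e.\ every coefficient vanishes.

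First I would argue by contradiction: suppose $C(x)=\sum_{k\geq 0}c_k x^k$ is not identically zero, and let $m$ be the least index with $c_m\neq 0$. Then I can factor $C(x)=x^m\,U(x)$, where $U(x)=\sum_{k\geq 0}c_{m+k}x^k$ is again convergent on $|x|_p<r$ and satisfies $U(0)=c_m\neq 0$. Since $\xi_n\neq 0$, the equation $C(\xi_n)=\xi_n^m\,U(\xi_n)=0$ forces $U(\xi_n)=0$ for every $n$. The key step is then a continuity/convergence argument: because $U$ is given by a convergent power series on a disk about $0$ and $\xi_n\to 0$, one has $U(\xi_n)\to U(0)=c_m$. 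This uses that on $|x|_p<r$ the series for $U$ converges and defines a continuous function, so evaluation commutes with the limit $\xi_n\to 0$. But $U(\xi_n)=0$ for all $n$ gives $c_m=\lim_{n\to\infty}U(\xi_n)=0$, contradicting the choice of $m$.

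The continuity claim deserves a word of care, since it is the heart of the matter. Over a nonarchimedean field the tail estimate is in fact cleaner than over $\mathbb{C}$: for $|x|_p$ small the dominant term of $U(x)-c_m=\sum_{k\geq 1}c_{m+k}x^k$ is controlled by $|x|_p\cdot\sup_{k\geq 1}|c_{m+k}x^{k-1}|_p$, and the convergence of the series on the disk guarantees $|c_{m+k}|_p\rho^k\to 0$ for any fixed $\rho<r$, so the supremum is bounded and $|U(x)-c_m|_p\leq M\,|x|_p$ for $|x|_p$ below some threshold. Hence $U(\xi_n)\to c_m$. I would just invoke this standard estimate rather than re-derive it, since it is exactly the statement that a convergent $p$-adic power series is continuous at its center.

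The main obstacle, such as it is, is purely bookkeeping: ensuring that $A$ and $B$ genuinely share a common disk of convergence so that $C$ and the factored series $U$ are legitimately convergent objects, and confirming that $\xi_n$ lies in that disk for all large $n$ (which follows from $\xi_n\to 0$). No deeper input is needed; the lemma is essentially the $p$-adic identity theorem for power series, and the proof is complete once the contradiction at the lowest nonvanishing coefficient is reached. Indeed, the cited source \cite[p.\,19, Lemma~1]{Iwa} records exactly this fact, so it suffices to present the short contradiction argument above.
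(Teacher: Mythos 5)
Your proof is correct: setting $C=A-B$, factoring $C(x)=x^m U(x)$ at the lowest nonvanishing coefficient, and using the ultrametric tail estimate $|U(x)-c_m|_p\leq M|x|_p$ to get $U(\xi_n)\to U(0)=c_m\neq 0$ while $U(\xi_n)=0$ is exactly the standard $p$-adic identity-theorem argument, and your bookkeeping (common disk of convergence, $\xi_n$ eventually inside it, convergence of the shifted series $U$) is sound. The paper itself gives no proof of this lemma but cites it directly from Iwasawa, whose proof is the same continuity-at-zero argument (phrased as an induction comparing coefficients one at a time, which is equivalent to your contradiction at the lowest nonvanishing coefficient), so your proposal matches the intended argument.
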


Let
\begin{equation}\label{c-e}
c_n=\sum_{i=0}^n\binom ni(-1)^{n-i}\epsilon_{i,\chi},\quad n\geq0.
\end{equation}

\begin{lemma}[cf. {\cite[p.\,26, Lemma 4]{Iwa}}]\label{c_n}
$$|c_n|_p\leq |p^n|_p,\quad n\geq0.$$
\end{lemma}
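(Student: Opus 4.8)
The plan is to recognize the quantity $c_n$ as a finite difference of the sequence $\{\epsilon_{i,\chi}\}$ with unit step and then feed in the Witt-type representation of $\epsilon_{i,\chi}$ supplied by (\ref{p-l-n-values}), so that the whole expression collapses to a single $p$-adic limit whose summands are manifestly divisible by $p^n$. Concretely, in the notation of (\ref{diff-o}) the element $c_n=\sum_{i=0}^n\binom ni(-1)^{n-i}\epsilon_{i,\chi}$ is exactly $\Delta_1^n\epsilon_{0,\chi}$, the $n$-th forward difference taken with step $c=1$, order $k=n$, and base index $0$.

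First I would record the representation $\epsilon_{i,\chi}=\lim_{N\to\infty}\sum_{\substack{a=1\\ p\nmid a}}^{fp^N}(-1)^a\chi(a)\langle a\rangle^i$, valid for every $i\geq0$. For $i\geq1$ this is precisely (\ref{p-l-n-values}); for $i=0$ it reduces, since $\chi_0=\chi$ and $\langle a\rangle^0=1$, to the statement of Corollary \ref{pro-ge-E-coro} with $n=0$. Thus the formula holds uniformly in $i$, which is what lets the binomial combination defining $c_n$ be treated as a whole.

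Next I would substitute this representation into $c_n$, interchange the finite sum $\sum_{i=0}^n$ with the limit over $N$ (legitimate, as it is a fixed finite combination of convergent sequences), and apply the binomial theorem inside. Because $\sum_{i=0}^n\binom ni(-1)^{n-i}\langle a\rangle^i=(\langle a\rangle-1)^n$, this yields
$$c_n=\lim_{N\to\infty}\sum_{\substack{a=1\\ p\nmid a}}^{fp^N}(-1)^a\chi(a)(\langle a\rangle-1)^n,$$
which is the very computation already carried out in the proof of Theorem \ref{Kum3}, only now with step $c=1$. To finish, I would estimate each summand: since $p\nmid a$ forces $\langle a\rangle\equiv1\pmod{p\mathbb Z_p}$ by (\ref{ome}), we have $(\langle a\rangle-1)^n\equiv0\pmod{p^n\mathbb Z_p}$, so every partial sum lies in the closed set $p^n\mathbb Z_p[\chi]$, and hence so does the limit $c_n$. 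Passing to $p$-adic absolute values gives $|c_n|_p\leq|p^n|_p$, as required.

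The only point that demands care, rather than a genuine obstacle, is that the step is $c=1$, which is not divisible by $p-1$; consequently Theorem \ref{Kum3} cannot be invoked verbatim and one must resist appealing to Euler's theorem. The full gain of one factor of $p$ per difference here comes instead from the cruder but entirely sufficient congruence $\langle a\rangle\equiv1\pmod{p\mathbb Z_p}$, which holds for all $p$-adic units and is exactly what the bound $|c_n|_p\leq|p^n|_p$ needs.
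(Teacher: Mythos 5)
Your proof is correct and follows essentially the same route as the paper: substitute the limit representation of $\epsilon_{i,\chi}$, collapse the binomial sum to $(\langle a\rangle-1)^n$, and use $\langle a\rangle\equiv1\pmod{p\mathbb Z_p}$ to extract the factor $p^n$ before passing to the limit. Your explicit justification of the $i=0$ case via Corollary \ref{pro-ge-E-coro} is a small refinement the paper glosses over (its formula (\ref{p-l-n-values}) is stated only for $n\geq1$), but it does not change the argument.
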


\begin{proof}
From the definition of $\epsilon_{i,\chi}$ and (\ref{p-l-n-values}), we get
$$\begin{aligned}
c_n&=\sum_{i=0}^n\binom ni(-1)^{n-i}
\lim_{N\to\infty}\sum_{\substack{a=1\\ p\nmid a}}^{fp^N}(-1)^a\chi(a)\langle a\rangle^{i}\\
&=\lim_{N\to\infty}\sum_{\substack{a=1\\ p\nmid a}}^{fp^N}(-1)^a\chi(a)(\langle a\rangle-1)^n.\\
\end{aligned}$$
Since $\langle a\rangle\equiv1\pmod p$ then $(\langle a\rangle-1)^n\equiv0\pmod{p^n},$ hence
we can conclude
$$\begin{aligned}
|c_n|_p&=\lim_{N\to\infty}
\biggl|\sum_{\substack{a=1\\ p\nmid a}}^{fp^N}(-1)^a\chi(a)(\langle a\rangle-1)^n\biggl|_p \\
&=\lim_{N\to\infty}|p^n\theta_n(N)|_p\leq|p^n|_p,
\end{aligned}$$
because
$$\sum_{\substack{a=1\\ p\nmid a}}^{fp^N}(-1)^a\chi(a)(\langle a\rangle-1)^n=p^n \theta_n(N)$$
for some $\theta_n(N)$ with $|\theta_n(N)|_p\leq1.$
This is the desired conclusion.
\end{proof}

We now apply Theorem 1 in \cite[p.\,22]{Iwa} for the above sequences $\epsilon_{n,\chi}$ and $c_n,\;n\geq0,$
in $\Q(\chi)$ and for
$$r=|p|_p<|p|_p^{1/(p-1)}.$$
Thus we show that
there exists such $A_\chi(x)\in\Q(\chi)[[x]]$ convergent for
$|\xi|_p<|p|_p^{1/(p-1)}|p|_p^{-1}=|p|_p^{-(p-2)/(p-1)}$ which takes the prescribed values at the non-negative integers,
\begin{equation}\label{ell-A-e}
A_\chi(n)=\epsilon_{n,\chi}.
\end{equation}
Let
\begin{equation}\label{ell-A}
\ell_{p,E}(s,\chi)=A_\chi(1-s)
\end{equation}
with the $A_\chi(x)$ mentioned above.
The uniqueness of $\ell_{p,E}(s,\chi)$ is a consequence of Lemma \ref{uniq}.
Then we have the following theorem.

\begin{theorem}\label{Iwa-e-ell}
Let $\chi$ be a Dirichlet character with an odd conductor $f=f_\chi.$ Then
there exists a $p$-adic analytic function
$$\ell_{p,E}(s,\chi)=\sum_{n=0}^\infty(-1)^n a_n(s-1)^n,\quad a_n\in\Q(\chi)$$
defined on $\mathcal D$
such that
$$\ell_{p,E}(1-n,\chi)=\epsilon_{n,\chi},\quad n\geq1.$$
Moreover, the $p$-adic Euler $\ell$-function $\ell_{p,E}(s,\chi)$ interpolates the numbers
$(1-\chi_n(p)p^n)\ell_E(-n,\chi_n)$ for $n\geq0.$
\end{theorem}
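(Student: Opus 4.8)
The plan is to collect the constructions already carried out in Section \ref{con-eu-ell} and package them as the asserted analytic function. The key input is that the sequence $\{\epsilon_{n,\chi}\}_{n\geq0}$ of (\ref{epsi-eu}) has finite differences $c_n$, defined in (\ref{c-e}), controlled by the estimate $|c_n|_p\leq|p^n|_p$ of Lemma \ref{c_n}. First I would apply Theorem 1 of \cite[p.\,22]{Iwa} to the pair of sequences $\{\epsilon_{n,\chi}\}$ and $\{c_n\}$ with growth parameter $r=|p|_p$. Because $r=|p|_p<|p|_p^{1/(p-1)}$, the bound from Lemma \ref{c_n} verifies the hypothesis of that theorem, so it yields a power series $A_\chi(x)=\sum_{n=0}^\infty a_nx^n\in\Q(\chi)[[x]]$ that converges on the disk $|x|_p<|p|_p^{1/(p-1)}|p|_p^{-1}=|p|_p^{-(p-2)/(p-1)}$ and satisfies $A_\chi(n)=\epsilon_{n,\chi}$ for all $n\geq0$, which is (\ref{ell-A-e}).

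Next I would set $\ell_{p,E}(s,\chi)=A_\chi(1-s)$ as in (\ref{ell-A}) and read off the stated shape. Substituting $x=1-s$ into $A_\chi(x)=\sum_{n\geq0}a_nx^n$ gives $\ell_{p,E}(s,\chi)=\sum_{n\geq0}a_n(1-s)^n=\sum_{n\geq0}(-1)^na_n(s-1)^n$, the claimed expansion. To confirm the function lives on all of $\mathcal D$, I would verify that $s\in\mathcal D$ sends $1-s$ into the convergence disk of $A_\chi$: if $|s|_p\geq1$ then $|1-s|_p\leq|s|_p<|p|_p^{-(p-2)/(p-1)}$, whereas if $|s|_p<1$ then $|1-s|_p=1<|p|_p^{-(p-2)/(p-1)}$, the last inequality holding because $p>2$. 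The interpolation formula is then immediate from (\ref{ell-A-e}): $\ell_{p,E}(1-n,\chi)=A_\chi(n)=\epsilon_{n,\chi}$ for $n\geq1$, in agreement with the direct calculation (\ref{p-l-n-values}). For uniqueness I would invoke Lemma \ref{uniq}: two power series both taking the value $\epsilon_{n,\chi}$ at every non-negative integer agree along the sequence $p^k\to0$ of nonzero points, hence coincide; the uniqueness of $A_\chi$ then forces that of $\ell_{p,E}(s,\chi)$ through (\ref{ell-A}).

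Finally, for the interpolation statement phrased in terms of $\ell_E$, I would combine (\ref{epsi-eu}) with the value formula (\ref{ell-ft-val}). Since (\ref{ell-ft-val}) gives $\ell_E(-n,\chi_n)=E_{n,\chi_n}$, the definition (\ref{epsi-eu}) becomes $\epsilon_{n,\chi}=(1-\chi_n(p)p^n)E_{n,\chi_n}=(1-\chi_n(p)p^n)\ell_E(-n,\chi_n)$; thus the identity $\ell_{p,E}(1-n,\chi)=\epsilon_{n,\chi}$ is exactly the asserted interpolation of $(1-\chi_n(p)p^n)\ell_E(-n,\chi_n)$. The value $n=0$ is covered as well, because $s=1\in\mathcal D$ and $A_\chi(0)=\epsilon_{0,\chi}$.

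The step I expect to be the main obstacle is the faithful application of Iwasawa's Theorem 1, specifically lining up the growth rate $r=|p|_p$ supplied by Lemma \ref{c_n} with the radius of convergence the theorem guarantees and checking that this radius is precisely large enough to cover $\mathcal D$ after the substitution $s\mapsto1-s$. Once that alignment is secured, the remainder is a direct repackaging of the identities established earlier in this section.
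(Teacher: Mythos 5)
Your proposal is correct and follows essentially the same route as the paper: applying Iwasawa's Theorem 1 to the sequences $\epsilon_{n,\chi}$ and $c_n$ with $r=|p|_p$ (justified by Lemma \ref{c_n}), defining $\ell_{p,E}(s,\chi)=A_\chi(1-s)$, invoking Lemma \ref{uniq} for uniqueness, and deducing the interpolation claim from (\ref{epsi-eu}) together with (\ref{ell-ft-val}). Your explicit ultrametric check that $s\in\mathcal D$ forces $1-s$ into the convergence disk of $A_\chi$ is a detail the paper leaves implicit, but it changes nothing in substance.
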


\begin{remark} Since $\chi_n=\chi$ whenever $n\equiv0\pmod{p-1},$ we have
$$\ell_{p,E}(1-n,\chi)=0\quad\text{if }\chi\neq\chi^0,\;n\not\equiv\delta_\chi\!\!\!\!\pmod2,\;n\equiv0\!\!\!\!\pmod{p-1}.$$
\end{remark}

\begin{corollary}\label{Iwa-e-ell-co}
Suppose $\chi\neq1$ and $p^2\nmid f_\chi.$ Then
$$\ell_{p,E}(s,\chi)=a_0-a_1(s-1)+a_2(s-1)^2-\cdots$$
with $|a_0|_p\leq1$ and with $p\mid a_n$ for all $n\geq1.$
\end{corollary}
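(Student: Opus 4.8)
The plan is to read the coefficients $a_n$ directly off the Mahler-type interpolation series underlying Theorem \ref{Iwa-e-ell}, and to bound them using Lemma \ref{c_n}. Recall that Iwasawa's construction realizes $\ell_{p,E}(s,\chi)=A_\chi(1-s)$ with
$$A_\chi(x)=\sum_{n=0}^\infty c_n\binom{x}{n},$$
where the $c_n$ are the differences (\ref{c-e}); here $A_\chi(m)=\sum_{n=0}^m\binom{m}{n}c_n=\epsilon_{m,\chi}$ is just Newton's forward-difference identity, so this is indeed the function of Theorem \ref{Iwa-e-ell}. Substituting $x=1-s$ and expanding each $\binom{1-s}{n}$ as a polynomial in $(s-1)$ then identifies the coefficients $a_n$.

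First I would handle the constant term: setting $s=1$ gives $a_0=A_\chi(0)=c_0=\epsilon_{0,\chi}$, since $\binom{0}{n}=0$ for $n\geq1$, and $|c_0|_p\leq1$ is the $n=0$ case of Lemma \ref{c_n} (concretely, $\epsilon_{0,\chi}=(1-\chi(p))\sum_{a=1}^f(-1)^a\chi(a)$ is a finite sum of roots of unity, hence a $p$-adic integer). For $k\geq1$, the coefficient of $(s-1)^k$ in $\binom{1-s}{n}=\frac1{n!}\prod_{j=0}^{n-1}(1-s-j)$ lies in $\frac1{n!}\mathbb Z$, because the numerator is a product of linear factors with integer coefficients. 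Thus $(-1)^ka_k=\sum_{n\geq k}c_n\gamma_{n,k}$ with $\gamma_{n,k}\in\frac1{n!}\mathbb Z$, and with $|c_n|_p\leq p^{-n}$ from Lemma \ref{c_n} the ultrametric inequality gives
$$|a_k|_p\leq\sup_{n\geq k}|c_n|_p\,|\gamma_{n,k}|_p\leq\sup_{n\geq k}p^{-n+v_p(n!)}.$$

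The crux is the resulting valuation estimate. Writing $v_p(n!)=(n-s_p(n))/(p-1)$, where $s_p(n)$ is the sum of the base-$p$ digits of $n$, we have
$$-n+v_p(n!)=\frac{-(p-2)n-s_p(n)}{p-1},$$
and for $p>2$ this is $\leq-1$ for every $n\geq1$: the inequality reduces to $(p-2)n+s_p(n)\geq p-1$, which holds since $(p-2)n\geq p-2$ and $s_p(n)\geq1$ (with equality precisely at $n=1$). Hence $|a_k|_p\leq p^{-1}$, i.e.\ $p\mid a_k$, for all $k\geq1$. The one point that needs care is legitimizing the rearrangement of $\sum_n c_n\binom{1-s}{n}$ into a genuine power series in $(s-1)$ before comparing coefficients; this is precisely the convergence supplied by Iwasawa's Theorem 1 (invoked just before Theorem \ref{Iwa-e-ell}), since the terms $c_n\gamma_{n,k}$ tend $p$-adically to $0$. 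I would finally remark that the hypotheses $\chi\neq1$ and $p^2\nmid f_\chi$ are not actually needed for either bound in the Euler setting—Lemma \ref{c_n} holds for all $\chi$—and seem inherited from the Bernoulli case, where a factor $1/f$ in the value $L_p(1,\chi)$ makes $p^2\nmid f$ indispensable for the integrality of the constant term.
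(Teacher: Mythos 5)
Your proof is correct, but it runs along a genuinely different track from the paper's. The paper works directly from the limit formula (\ref{p-l-fts}): it expands $\langle a\rangle^{1-s}=\exp((1-s)\log_p\langle a\rangle)=\sum_n\frac{1}{n!}(1-s)^n(\log_p\langle a\rangle)^n$ term by term, reads off the closed form $a_n=\lim_{N\to\infty}\sum_{a,\,p\nmid a}(-1)^a\chi(a)(\log_p\langle a\rangle)^n/n!$, and then uses $\langle a\rangle\equiv1\pmod p$, hence $v_p(\log_p\langle a\rangle)\geq1$, to conclude $(\log_p\langle a\rangle)^n/n!\equiv0\pmod p$ for $n\geq1$; the case $|a_0|_p\leq1$ is settled, exactly as in your argument, by the $n=0$ case of Lemma \ref{c_n}. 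You instead stay entirely on the interpolation side: the Newton series $A_\chi(x)=\sum_n c_n\binom xn$ from Iwasawa's Theorem 1, the bound $|c_n|_p\leq p^{-n}$ of Lemma \ref{c_n}, and the integrality of the Stirling-type coefficients $\gamma_{n,k}\in\frac1{n!}\mathbb Z$. Note that both proofs ultimately rest on the identical valuation inequality: your $-n+v_p(n!)\leq-1$ for $n\geq1$, $p>2$, is exactly what makes the paper's congruence $(\log_p\langle a\rangle)^n/n!\equiv0\pmod p$ work, since $v_p((\log_p\langle a\rangle)^n/n!)\geq n-v_p(n!)$. What the paper's route buys is an explicit description of the coefficients $a_n$ as limits of character sums (which it in fact uses to identify $a_0$ with $c_0$); what your route buys is that it never invokes the $p$-adic exponential and logarithm or the convergence of that expansion, deriving everything from the interpolation data $(\epsilon_{n,\chi})$ and Lemma \ref{c_n} alone --- your rearrangement step, with the bound $|c_n\gamma_{n,k}|_p\leq p^{-n+v_p(n!)}\to0$, is adequately justified. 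Your closing observation is also consistent with the source: the paper's own proof never uses $\chi\neq1$ or $p^2\nmid f_\chi$ either, and these hypotheses indeed appear to be carried over from the Bernoulli-side statement (Theorem 5.12 of \cite{Wa2}), where the unboundedness of the Bernoulli distribution makes them genuinely necessary.
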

\begin{proof}
By using (\ref{p-l-fts}) and the formula
$$\langle a\rangle^{1-s}=\exp((1-s)\log_p\langle a\rangle)=\sum_{n=0}^\infty\frac1{n!}(1-s)^n(\log_p\langle a\rangle)^n,$$
where $\exp$ (resp. $\log_p$) is the $p$-adic exponential (resp. logarithm) function (see \cite{Iwa}), we have
$$\ell_{p,E}(s,\chi)=\sum_{n=0}^\infty(-1)^n(s-1)^n\lim_{N\to\infty}\sum_{\substack{a=1\\ p\nmid a}}^{fp^N}(-1)^a\chi(a)
\frac{(\log_p \langle a\rangle)^n}{n!},\quad n\geq0.$$
From Theorem \ref{Iwa-e-ell}, we find that
$$a_n=\lim_{N\to\infty}\sum_{\substack{a=1\\ p\nmid a}}^{fp^N}(-1)^a\chi(a)
\frac{(\log_p \langle a\rangle)^n}{n!},\quad n\geq0.$$
If $n\geq1,$ then ${(\log_p \langle a\rangle)^n}/{n!}\equiv0\pmod{p}.$ Thus we obtain
$$p\mid a_n\quad\text{for }n\geq 1.$$
By Lemma \ref{c_n} $|a_0|_p=|c_0|_p\leq1$ is obvious.
\end{proof}

By Corollary \ref{Iwa-e-ell-co}, we have
\begin{equation}
\ell_{p,E}(s,\omega^m)=\ell_{p,E}(s,\omega^n)
\end{equation}
if $m\equiv n\not\equiv0\pmod{p-1},$
hence we have the following which can be proved in the same way as Corollary 5.14 in \cite{Wa2}.

\begin{corollary}\label{Iwa-e-ell-co-2}
If $m$ and $n$ are positive even integers with $m\equiv n\pmod{(p-1)p^k}$ and $n\not\equiv0\pmod{p},$ then
$$(1-p^m)E_m\equiv(1-p^n)E_n\pmod{p^{k+1}}.$$
\end{corollary}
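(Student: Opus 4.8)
The plan is to realize both $(1-p^m)E_m$ and $(1-p^n)E_n$ as special values, at $s=1-m$ and $s=1-n$, of one and the same $p$-adic Euler $\ell$-function, and then to extract the congruence from the integrality of its Taylor coefficients supplied by Corollary \ref{Iwa-e-ell-co}. Since $m\equiv n\pmod{(p-1)p^k}$ forces $m\equiv n\pmod{p-1}$, the Teichm\"uller character satisfies $\omega^m=\omega^n$; write $\chi$ for this common character. Provided $\chi=\omega^m\neq1$ (which is where the nondegeneracy hypothesis on $m,n$ enters), $\chi$ is a nontrivial Dirichlet character of conductor $p$, and in particular $p^2\nmid f_\chi$, so the hypotheses needed to invoke Corollary \ref{Iwa-e-ell-co} for $\ell_{p,E}(s,\chi)$ are in force.

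Next I would identify the two special values. For $\chi=\omega^m$ one has $\chi_j=\chi\omega^{-j}=\omega^{m-j}$, so $\chi_m=\chi^0$ and, because $m\equiv n\pmod{p-1}$, also $\chi_n=\omega^{m-n}=\chi^0$. Feeding this into (\ref{epsi-eu}) and (\ref{p-l-n-values}), and using $\chi^0(p)=1$ together with $E_{j,\chi^0}=E_j$, gives
$$\ell_{p,E}(1-m,\chi)=\epsilon_{m,\chi}=(1-p^m)E_m,\qquad \ell_{p,E}(1-n,\chi)=\epsilon_{n,\chi}=(1-p^n)E_n.$$
Both arguments $1-m,1-n$ are non-positive integers, hence lie in $\mathcal D$, so these evaluations are legitimate, and crucially they are values of the \emph{same} function because $\omega^m=\omega^n$.

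It then remains to compare the two values through the power series. Writing $\ell_{p,E}(s,\chi)=\sum_{i\geq0}(-1)^i a_i(s-1)^i$ as in Theorem \ref{Iwa-e-ell}, substitution of $s=1-m$ and $s=1-n$ collapses the signs to $\sum_{i\geq0}a_i m^i$ and $\sum_{i\geq0}a_i n^i$, so that
$$(1-p^m)E_m-(1-p^n)E_n=\sum_{i\geq1}a_i\bigl(m^i-n^i\bigr).$$
By Corollary \ref{Iwa-e-ell-co} each $a_i$ with $i\geq1$ is divisible by $p$, while $m\equiv n\pmod{p^k}$ yields $p^k\mid(m^i-n^i)$ for every $i\geq1$ (factor out $m-n$); hence every term has $p$-adic valuation at least $k+1$, and since $p$-adic convergence preserves this lower bound, the whole sum is $\equiv0\pmod{p^{k+1}}$, which is the asserted congruence.

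The series manipulation and the elementary divisibility $p^k\mid(m^i-n^i)$ are routine; the step that needs the most care is the bookkeeping of the second paragraph, namely checking that both twists $\chi_m$ and $\chi_n$ degenerate to the trivial character so that one genuinely recovers $E_m$ and $E_n$ from a single function $\ell_{p,E}(s,\chi)$, and verifying that the hypotheses on $\chi$ (nontriviality and conductor prime to $p^2$) really hold so that Corollary \ref{Iwa-e-ell-co} applies.
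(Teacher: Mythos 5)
Your proof is correct and is essentially the paper's own argument: the paper establishes this corollary by noting $\ell_{p,E}(s,\omega^m)=\ell_{p,E}(s,\omega^n)$ when $m\equiv n\pmod{p-1}$ and then deferring to the method of Corollary 5.14 of Washington \cite{Wa2}, which is precisely the expansion-and-divisibility argument you spell out via Theorem \ref{Iwa-e-ell} and Corollary \ref{Iwa-e-ell-co}. Your reading of the hypothesis as guaranteeing $\omega^m=\omega^n\neq1$ (i.e.\ $n\not\equiv0\pmod{p-1}$, as in Proposition \ref{kummer}, rather than the literal $n\not\equiv0\pmod{p}$) is the right one, since Corollary \ref{Iwa-e-ell-co} requires $\chi\neq1$.
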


Let $\chi$ be the Dirichlet character with an odd conductor $f=f_\chi\in\mathbb N.$
Let $F$ be a positive integer multiple of $p$ odd and $f.$ Then by (\ref{power-sum}), we have
\begin{equation}\label{rabbe}
F_{\chi}(t)=2\sum_{m=0}^\infty(-1)^m\chi(m)e^{mt}=2\sum_{a=1}^{F}(-1)^a\chi(a)\frac{(e^{\frac aF})^{Ft}}{e^{Ft}+1}.
\end{equation}
Therefore, by (\ref{def-E}), (\ref{def-E-pow}) and (\ref{gen-E-numb-d}), we obtain the following
\begin{equation}\label{rabbe-lem}
E_{n,\chi}=F^n\sum_{a=1}^{F}(-1)^a\chi(a)E_{n}\left(\frac{a}{F}\right).
\end{equation}
If $\chi_n(p)\neq0,$ then $(p,f_{\chi_n})=1,$ so that $F/p$ is a multiple of $f_{\chi_n}.$
From (\ref{rabbe-lem}), we derive
\begin{equation}\label{rabbe-ell}
\begin{aligned}
\chi_n(p)p^nE_{n,\chi_n}
&=\chi_n(p)p^n\left(\frac Fp\right)^n\sum_{a=1}^{F/p}(-1)^a\chi_n(a)E_{n}\left(\frac{a}{F/p}\right)\\
&=F^n\sum_{\substack{a=1\\p\mid a}}^F(-1)^a\chi_n(a)E_{n}\left(\frac{a}{F}\right).
\end{aligned}
\end{equation}
Thus by (\ref{epsi-eu}), (\ref{rabbe-lem}) and (\ref{rabbe-ell}), we have
\begin{equation}\label{ell-le}
\begin{aligned}
\epsilon_{n,\chi}
&=F^n\sum_{\substack{a=1\\p\nmid a}}^{F}(-1)^a\chi_n(a)E_{n}\left(\frac{a}{F}\right).
\end{aligned}
\end{equation}
Since $E_n(x)=\sum_{k=0}^n\binom nkx^{n-k}E_k$ and $\chi_n(a)=\chi(a)\omega^{-n}(a),$ by (\ref{ell-le}), we easily see that
\begin{equation}\label{ell-le-ft}
\epsilon_{n,\chi}
=\sum_{\substack{a=1\\p\nmid a}}^{F}(-1)^a\chi(a)\langle a\rangle^n
\sum_{k=0}^\infty\binom nk\left(\frac Fa\right)^kE_k,
\end{equation}
where $\langle a\rangle=\omega^{-1}(a)a.$ From Theorem \ref{Iwa-e-ell} and (\ref{ell-le-ft}), we obviously have
\begin{equation}\label{ne-val}
\begin{aligned}
\ell_{p,E}(-n,\chi)=\sum_{\substack{a=1\\p\nmid a}}^{F}(-1)^a\chi(a)\langle a\rangle^{1+n}
\sum_{k=0}^\infty\binom{1+n}k\left(\frac Fa\right)^kE_k
\end{aligned}
\end{equation}
for $n\geq0$ (cf. \cite{TK1,KKJR,La,Wa}).
Using Theorem \ref{Iwa-e-ell} and (\ref{ne-val}),
our $p$-adic Euler $\ell$-function $\ell_{p,E}(s,\chi)$ can be written in the form

\begin{theorem}\label{Wa-e-ell}
Let $\chi$ be a Dirichlet character with an odd conductor $f=f_\chi,$
and let $F$ be a positive integer multiple of $p$ odd and $f.$ Then
$$\ell_{p,E}(s,\chi)=\sum_{\substack{ a=1 \\  p\nmid a}}^{F}(-1)^a\chi(a) H_p(s,a,F),\quad s\in\Z.$$
Here $H_p(s,a,F)$ is the Washington function \cite[$\S$5.2]{Wa2}. It is defined for
$a\in\mathbb Z_p^\times,$ $s\in \Z$ and $p\mid F$ by
$$H_p(s,a,F)=\langle a\rangle^{1-s}
\sum_{k=0}^\infty\binom {1-s}k \left(\frac Fa\right)^{k}E_{k}.$$
This function is analytic function for $s\in \Z.$
\end{theorem}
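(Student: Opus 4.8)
The plan is to show that the claimed expression in terms of Washington functions agrees with the already-constructed $p$-adic Euler $\ell$-function $\ell_{p,E}(s,\chi)$ of Theorem \ref{Iwa-e-ell}, and then to verify the asserted analyticity of each summand. First I would fix $F$ a common odd multiple of $p$ and $f$, and define the candidate function
$$
G(s)=\sum_{\substack{a=1\\ p\nmid a}}^{F}(-1)^a\chi(a)\,H_p(s,a,F),\qquad
H_p(s,a,F)=\langle a\rangle^{1-s}\sum_{k=0}^\infty\binom{1-s}{k}\Bigl(\frac{F}{a}\Bigr)^{k}E_k.
$$
Following the method of \cite[$\S$5.2]{Wa2}, I would first check that $H_p(s,a,F)$ converges and is analytic for $s\in\Z$: since $p\nmid a$ we have $\langle a\rangle\in\mathbb Z_p^\times$, so $\langle a\rangle^{1-s}$ is given by the convergent exponential series used in Corollary \ref{Iwa-e-ell-co}, while the factor $(F/a)^k E_k$ carries a factor $p^k$ from $F$ and the Euler numbers $E_k$ are $p$-integral (indeed $v_p(E_k)\geq 0$), so the inner series converges $p$-adically and defines an analytic function of $s$ on $\Z$. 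Summing over the finitely many residues $a$ preserves analyticity, so $G$ is analytic on $\Z$.

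Next I would identify $G$ with $\ell_{p,E}(s,\chi)$ by evaluating at the interpolation points. Specializing $s=-n$ for $n\geq0$ turns $\binom{1-s}{k}=\binom{1+n}{k}$ into a finite binomial, so the inner sum terminates and
$$
G(-n)=\sum_{\substack{a=1\\ p\nmid a}}^{F}(-1)^a\chi(a)\,\langle a\rangle^{1+n}
\sum_{k=0}^{\infty}\binom{1+n}{k}\Bigl(\frac{F}{a}\Bigr)^{k}E_k,
$$
which is exactly the right-hand side of \eqref{ne-val}. That formula was derived from \eqref{ell-le-ft} and the defining relation \eqref{epsi-eu}, so $G(-n)=\ell_{p,E}(-n,\chi)$ for all $n\geq0$. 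Thus $G$ and $\ell_{p,E}(\cdot,\chi)$ are two functions, analytic on a neighborhood of $0$ in $\overline{\mathbb Q}_p$ after the substitution $s\mapsto 1-s$, that agree on the sequence $s=-n\to$, i.e. on a sequence of arguments accumulating at a point. Invoking the uniqueness Lemma \ref{uniq} (as was done to pin down $\ell_{p,E}$ itself via \eqref{ell-A}), I conclude $G(s)=\ell_{p,E}(s,\chi)$ identically for $s\in\Z$.

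The main obstacle I expect is the convergence and analyticity bookkeeping for $H_p(s,a,F)$ on all of $\Z$, rather than merely on the smaller disc $\mathcal D$ where $\ell_{p,E}$ was originally built. The two expansions live a priori on different domains: Theorem \ref{Iwa-e-ell} produces a power series in $(s-1)$ convergent on $\mathcal D$, whereas the Washington form is analytic on the full ring $\Z$. The care needed is to verify that the binomial coefficients $\binom{1-s}{k}$ stay $p$-adically bounded uniformly in $s\in\Z$ (they lie in $\Z$ since $1-s\in\Z$) and that the factor $(F/a)^k$ supplies enough $p$-divisibility, via $v_p(F)\geq1$, to dominate any growth, guaranteeing the inner series converges for every $s\in\Z$ and defines an analytic function there. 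Once this uniform estimate is in hand, the identification on the overlapping interpolation points and Lemma \ref{uniq} finish the argument, simultaneously extending $\ell_{p,E}(s,\chi)$ analytically to all of $\Z$.
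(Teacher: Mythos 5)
Your proposal is correct and takes essentially the same route as the paper, whose proof consists precisely of observing that both sides take the same values at the negative integers --- exactly the identity (\ref{ne-val}) --- and that, being analytic on $\mathbb Z_p$ where the negative integers are dense, they must coincide. Your additional verification that $H_p(s,a,F)$ converges and is analytic (via $\bigl|\binom{1-s}{k}\bigr|_p\le 1$ for $s\in\mathbb Z_p$, $p\mid F$, $p\nmid a$, and $|E_k|_p\le 1$) fills in a detail the paper leaves implicit; the only repair needed is that to invoke Lemma \ref{uniq} you should pass to a subsequence with $1-s=p^N\to 0$ (or argue by density and continuity, as the paper does), since the negative integers themselves do not converge $p$-adically, and note that no extension of the domain is actually required because $\mathcal D$ already contains $\mathbb Z_p$.
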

\begin{proof}
The left hand side and right the hand side have the same values at the negative integers, which are dense in
the ring of $p$-adic integers $\Z,$
and they are both analytic, hence they coincide.
\end{proof}

Next, we derive some functional equations for the Washington function (cf. \cite{La}).

\begin{proposition}\label{H-p-ft}
If $s\in\mathbb Z_p$ and $a\in\mathbb Z_p^\times,$ then
$$H_p(s,a,F)=H_p(s,F-a,F).$$
\end{proposition}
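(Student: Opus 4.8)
The goal is to show the Washington function $H_p(s,a,F)$ is invariant under $a \mapsto F-a$. Recall from the statement that
$$H_p(s,a,F)=\langle a\rangle^{1-s}\sum_{k=0}^\infty\binom{1-s}{k}\left(\frac{F}{a}\right)^{k}E_{k}.$$
The plan is to exploit the relation $E_n(1-x)=(-1)^nE_n(x)$ from (\ref{def-E-n-p}), which governs the reflection symmetry of the Euler polynomials, together with the expansion $E_n(x)=\sum_{k=0}^n\binom{n}{k}x^{n-k}E_k$ already used in deriving (\ref{ell-le-ft}). First I would recognize that $H_p(s,a,F)$ is essentially a $p$-adic analytic continuation of the finite expression $F^n E_n(a/F)\cdot(\text{normalization})$ that appears inside the sum defining $\epsilon_{n,\chi}$; concretely, at $s=1-n$ (negative integers shifted) the series $\langle a\rangle^{n}\sum_k\binom{n}{k}(F/a)^k E_k$ collapses, since $\binom{n}{k}=0$ for $k>n$, to the finite polynomial value $F^n E_n(a/F)$ up to the Teichm\"uller factor $\omega^{-n}(a)$ absorbed in $\langle a\rangle^n$.

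The key step is therefore to establish the identity at the dense set of integer points and then invoke analyticity. At $s=1-n$ with $n\geq 0$, I would compute $H_p(1-n,F-a,F)=\langle F-a\rangle^{n}\sum_{k=0}^n\binom{n}{k}(F/(F-a))^k E_k = F^n E_n\!\left(\tfrac{F-a}{F}\right)\omega^{-n}(F-a)$, using that $\langle b\rangle^n = \omega^{-n}(b)b^n$ and the finite binomial expansion of $E_n$. Since $p\mid F$ and $p\nmid a$, we have $F-a\equiv -a\pmod{p}$, so $\omega(F-a)=\omega(-a)=\omega(-1)\omega(a)=-\omega(a)$ because $\omega(-1)=-1$ for odd $p$; hence $\omega^{-n}(F-a)=(-1)^{-n}\omega^{-n}(a)=(-1)^n\omega^{-n}(a)$. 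On the polynomial side, $E_n((F-a)/F)=E_n(1-a/F)=(-1)^nE_n(a/F)$ by the third line of (\ref{def-E-n-p}). Multiplying, the two factors of $(-1)^n$ cancel, yielding $H_p(1-n,F-a,F)=F^n E_n(a/F)\omega^{-n}(a)=H_p(1-n,a,F)$.

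Having matched the two functions at $s=1-n$ for all $n\geq 0$ — a sequence of points accumulating at $s=1$ and dense in $\Z$ — I would finish by appealing to the analyticity of $H_p(s,a,F)$ in $s$ asserted at the end of Theorem \ref{Wa-e-ell}, via the uniqueness principle of Lemma \ref{uniq} (with $A(x)$ and $B(x)$ the power series in $x=s-1$ representing $H_p(s,a,F)$ and $H_p(s,F-a,F)$). The main obstacle I anticipate is purely bookkeeping rather than conceptual: one must be careful that the infinite series defining $H_p$ genuinely terminates (or converges $p$-adically) at the integer points so that the finite Euler-polynomial computation is legitimate, and that the two sign contributions — one from the Teichm\"uller character at $-1$, one from the functional equation of $E_n$ — are tracked with the correct exponents so they cancel rather than reinforce. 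Once that sign accounting is pinned down, the reflection identity and the density-plus-analyticity argument close the proof immediately.
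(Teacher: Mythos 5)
Your proposal is correct and takes essentially the same route as the paper's own proof: evaluate $H_p$ at the integer points $s=1-n$, where the binomial series terminates and gives $\omega^{-n}(a)F^nE_n(a/F)$, then cancel the sign $(-1)^n$ from the reflection formula $E_n(1-x)=(-1)^nE_n(x)$ against the sign from $\omega^{-1}(F-a)=\omega^{-1}(-a)=-\omega^{-1}(a)$, and finish by density of the integer points in $\mathbb{Z}_p$ together with analyticity. The only differences are cosmetic (the paper phrases the last step via density of non-negative integers rather than explicitly through Lemma \ref{uniq}), so your argument matches the paper's in substance.
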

\begin{proof}
From the definition of $H_p(s,a,F),$ we obtain
$$H_p(1-n,a,F)=\langle a\rangle^{n}
\sum_{k=0}^n\binom nk \left(\frac aF\right)^{-k}E_{k}=\langle a\rangle^{n}E_n\left(\frac{a}{F}\right),$$
where $n\geq1,a\in\mathbb Z_p^\times$ and $p\mid F$ (cf. \cite{Wa}).
One can see easily from this that
$$\begin{aligned}
H_p(1-n,F-a,F)
&=\langle F-a\rangle^n\left(\frac{F}{F-a}\right)^n \sum_{k=0}^n\binom nk\left(\frac{F-a}{F}\right)^{n-k}E_k \\
&=\langle F-a\rangle^n\left(\frac{F}{F-a}\right)^n E_n\left(1-\frac{a}{F}\right) \\
&=\langle a\rangle^n \langle -1\rangle^n \left(\frac{F}{a}\right)^n E_n\left(\frac{a}{F}\right) \\
&=H_p(1-n,a,F),
\end{aligned}$$
since $E_n(1-x)=(-1)^nE_n(x)$
and $\omega^{-1}(F-a)=\omega^{-1}(-a)=-\omega^{-1}(a)$ (cf. \cite{Iwa}).
Using the fact that the set of non-negative integers are dense in $\mathbb Z_p,$ we see that
$$H_p(s,a,F)=H_p(s,F-a,F)$$
for $s\in\mathbb Z_p$ and $a\in\mathbb Z_p^\times.$ This is the desired conclusion.
\end{proof}

\section{Evaluation of $\ell_{p,E}'(0,\chi)$ and $\ell_{p,E}(n,\chi)$ at $n\geq1$}

In finding the value of the derivation of the function $\ell_{p,E}(s,\chi)$ at $s=0$ and
the values of $\ell_{p,E}(n,\chi)$ at $n\geq1$ (cf. \cite{Di2,FG}), we define the function
\begin{equation}\label{Di-ft}
G_{p,E}(x)=\lim_{N\to\infty}\sum_{a=0}^{p^N-1}\left\{(x+a)\log_p(x+a)-(x+a)\right\}(-1)^a
\end{equation}
for $x\in\C$ with $|x|_p>1$ (cf. \cite{TK1}).
This definition is slightly different from the original one due to Diamond (cf. \cite[p.\,326, Definition of $G_p$]{Di}).
Here $\log_p$ is the $p$-adic logarithm function of Iwasawa (see \cite{Iwa}).
Let $|x|_p>1.$ For $a\in\mathbb Z_p$ we have $\left|\frac ax\right|_p<1$ so that
$$\begin{aligned}
&(x+a)\log_p(x+a)-(x+a) \\
&=x\left(1+\frac ax\right)\log_p\left(1+\frac ax\right)+(x+a)\log_p(x)-(x+a) \\
&=a+x\sum_{n=1}^\infty\frac{(-1)^{n+1}}{n(n+1)}\left(\frac ax\right)^{n+1}+(x+a)\log_p(x)-(x+a)
\end{aligned}$$
(cf. \cite{Di}). From (\ref{Eu-witt-gen}), we easily obtain that
\begin{equation}\label{Eu-witt-n}
\lim_{N\to\infty}\sum_{a=0}^{p^N-1}a^n(-1)^a=E_n,
\end{equation}
so
\begin{equation}\label{Di-ft-Eu}
\begin{aligned}
G_{p,E}(x)
&=\left(x-\frac12\right)\log_p(x)-x-\sum_{n=1}^\infty\frac{1}{n(n+1)}\frac1{x^n}E_{n+1} \\
&=\left(x-\frac12\right)\log_p(x)-x-\sum_{n=1}^\infty\frac{1}{n(n+1)}\frac1{x^n}\zeta_{E}(-n-1),
\end{aligned}
\end{equation}
where we use (\ref{z-ft-val}) and
the fact that $E_{n+1}=0$ if $n$ is odd. This formula arises from the asymptotic expansion of the classical
complex log gamma function.
We put
$$D=\frac{\text{d}}{\text{d}x}.$$
Furthermore, if $D^m$ denotes the $m$-th derivative, by (\ref{Di-ft-Eu}), we have
\begin{equation}\label{h-Di-ft-Eu}
D^{m}G_{p,E}(x)=(-1)^{m}(m-2)!\sum_{k=0}^\infty\binom{-m+1}k\frac1{x^{m+k-1}}E_k,\quad m\geq2.
\end{equation}

\begin{proposition}\label{Da-gamma}
Let $p$ be a fixed odd prime number. Then
\begin{enumerate}
\item $G_{p,E}(1-x)+G_{p,E}(x)=0$ $(x\in\mathbb C_p-\mathbb Z_p).$
\item $G_{p,E}(x)-G_{p,E}(-x)=2x(\log_p(x)-1)$ $(x\in\mathbb C_p-\mathbb Z_p).$
\item $G_{p,E}(1+x)+G_{p,E}(x)=2x(\log_p(x)-1)$ $(x\in\mathbb C_p-\mathbb Z_p).$
\end{enumerate}
\end{proposition}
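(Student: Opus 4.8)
The plan is to prove (2) and (3) by two different and essentially elementary routes and then to deduce the reflection formula (1) as a purely formal consequence of them. Throughout I would use two standing facts: first, that $|x|_p>1$ forces $|1\pm x|_p=|{-x}|_p=|x|_p>1$ by the ultrametric inequality, so every argument occurring below stays inside the domain of $G_{p,E}$; and second, that Iwasawa's logarithm satisfies $\log_p(-y)=\log_p(y)$ for all $y\in\mathbb{C}_p^\times$, since $\log_p$ vanishes on the root of unity $-1$.

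For (2) I would argue directly from the closed form (\ref{Di-ft-Eu}). Substituting $-x$ for $x$ there and using $\log_p(-x)=\log_p(x)$ together with $(-x)^{-n}=(-1)^n x^{-n}$, the difference $G_{p,E}(x)-G_{p,E}(-x)$ collapses cleanly: the logarithmic parts combine to $[(x-\tfrac12)-(-x-\tfrac12)]\log_p(x)=2x\log_p(x)$, the linear parts give $-x-x=-2x$, and the series part becomes $-\sum_{n\ge1}\frac{1-(-1)^n}{n(n+1)}\,x^{-n}E_{n+1}$. The factor $1-(-1)^n$ vanishes for even $n$, while for odd $n$ the coefficient $E_{n+1}$ has even index and is therefore zero by $E_{2k}=0$; hence the entire series drops out and one is left with $2x\log_p(x)-2x=2x(\log_p(x)-1)$, which is exactly (2). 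The only point to verify is that subtracting the two convergent $p$-adic series term by term is legitimate, which holds because each converges for $|x|_p>1$ (the $E_n$ have denominators that are powers of $2$, hence are $p$-integral for the odd prime $p$).

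For (3) the closed form is awkward, since $\log_p(1+x)$ does not simplify against $\log_p(x)$, so instead I would return to the defining limit (\ref{Di-ft}). Writing $g(y)=y\log_p(y)-y$ and reindexing $a\mapsto a+1$ in $G_{p,E}(1+x)$ turns it into $-\lim_N\sum_{a=1}^{p^N}g(x+a)(-1)^a$; adding the series for $G_{p,E}(x)=\lim_N\sum_{a=0}^{p^N-1}g(x+a)(-1)^a$ makes all terms with $1\le a\le p^N-1$ cancel for each fixed $N$, leaving precisely the two boundary terms $g(x)$ and $-g(x+p^N)(-1)^{p^N}=+g(x+p^N)$, where I use that $p^N$ is odd. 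Since $p^N\to0$ and $g$ is continuous at $x$, passing to the limit gives $G_{p,E}(1+x)+G_{p,E}(x)=2g(x)=2x(\log_p(x)-1)$, which is (3).

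Finally, (1) follows by combining the two results: subtracting (2) from (3) eliminates the common right-hand side and yields $G_{p,E}(1+x)+G_{p,E}(-x)=0$, and replacing $x$ by $-x$ (again admissible, as $|{-x}|_p>1$) gives $G_{p,E}(1-x)+G_{p,E}(x)=0$. I expect the only genuine subtlety to lie in the two evaluation steps: in (3) one must justify interchanging $\lim_N$ with the finite telescoped sum so that the surviving boundary term really tends to $g(x)$, i.e. that $g(x+p^N)\to g(x)$, which rests on the continuity of Iwasawa's $\log_p$ on $\mathbb{C}_p^\times$ and on the convergence of the defining limit already granted for $G_{p,E}$; and in (2) the termwise cancellation of the series, which is the step where recognizing the right route (closed form for the reflection $x\mapsto-x$, defining limit for the translation $x\mapsto x+1$) is what makes the argument go through.
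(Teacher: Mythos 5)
Your proof is correct, but it runs in the opposite logical direction from the paper's. The paper proves (1) first, by expanding the functional equation $H_p(s,a,F)=H_p(s,F-a,F)$ of Proposition~\ref{H-p-ft} in powers of $s$ and equating the coefficients of $s$, which yields $G_{p,E}(a/F)+G_{p,E}(1-a/F)=0$ at the rational points $a/F$; it then proves (2) exactly as you do, from the closed form \eqref{Di-ft-Eu} together with $\log_p(-x)=\log_p(x)$ and the vanishing of the even-index Euler numbers; and it obtains (3) by combining (1) and (2) --- the same formal manipulation you perform, read backwards. Your substitute for the paper's proof of (1), namely the direct telescoping of the defining limit \eqref{Di-ft} under the translation $x\mapsto 1+x$ (writing $g(y)=y\log_p(y)-y$, using that $p^N$ is odd so the boundary terms are $g(x)+g(x+p^N)$, and that $g(x+p^N)\to g(x)$ since $p^N\to 0$ and $\log_p$ is continuous on $\mathbb{C}_p^\times$), is a genuine simplification: it is self-contained, stays entirely within the region $|x|_p>1$ on which $G_{p,E}$ is actually defined, and avoids both the Washington-function machinery and a tacit extension step in the paper --- the identity coming from Proposition~\ref{H-p-ft} is obtained only at points $a/F\in\mathbb{Q}$, and promoting it to all $x$ with $|x|_p>1$ (let alone to $\mathbb{C}_p-\mathbb{Z}_p$ as stated) requires a density-plus-local-analyticity argument the paper leaves implicit. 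What the paper's route buys instead is thematic coherence: it exhibits the reflection formula (1) as the $s$-linear shadow of the $H_p$ functional equation, which is precisely the form in which $G_{p,E}$ is subsequently used in Theorem~\ref{der-ell}. One caveat applies equally to both arguments: since \eqref{Di-ft} defines $G_{p,E}$ only for $|x|_p>1$, your observation that $|1\pm x|_p=|{-x}|_p=|x|_p>1$ keeps every term well defined there, but neither your proof nor the paper's actually covers the full stated domain $\mathbb{C}_p-\mathbb{Z}_p$.
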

\begin{proof}
Upon expanding either side of Proposition \ref{H-p-ft} in powers of $s$ and equating the coefficients of $s,$ we get
$$\begin{aligned}
&\omega^{-1}(a)\left(\frac F2-a\right)(1+\log_p(F))-\omega^{-1}(a)FG_{p,E}\left(\frac aF\right) \\
&=\omega^{-1}(F-a)\left(\frac F2-(F-a)\right)(1+\log_p(F))-\omega^{-1}(F-a)FG_{p,E}\left(\frac {F-a}F\right).
\end{aligned}$$
This yields
$$G_{p,E}\left(\frac aF\right)+G_{p,E}\left(1-\frac {a}F\right)=0.$$
This is true for any positive integer $F$ divisible by $p,$ and any $p$-unit $a,$ thus established (1).
Using $\log_p(-x)=\log_p(x),$ (2) is straightforward consequences of the the power series expansion.
Formula (3) follows form (1) and (2).
\end{proof}

Now we prove a formula for $\ell_{p,E}'(0,\chi)$ which is analogous to a classical formula of $L_{p}'(0,\chi)$
(see \cite{Di,FG}).

\begin{theorem}\label{der-ell}
Let $\chi$ be a primitive Dirichlet character, and let $F$ be a positive odd integral multiple of $p$ and $f_\chi.$
Then
$$\ell_{p,E}'(0,\chi)=F\sum_{\substack{ a=1\\  p\nmid a}}^{F}(-1)^{a+1} \chi_1(a)G_{p,E}\left(\frac aF\right)-
(1+\log_p(F))\ell_{p,E}(0,\chi).$$
\end{theorem}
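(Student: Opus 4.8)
The plan is to differentiate the formula for $\ell_{p,E}(s,\chi)$ supplied by Theorem \ref{Wa-e-ell} and evaluate at $s=0$. Since $\ell_{p,E}(s,\chi)=\sum_{\substack{a=1\\ p\nmid a}}^{F}(-1)^a\chi(a)H_p(s,a,F)$ is a \emph{finite} sum of functions that are analytic in $s\in\Z$, I may differentiate under the summation sign, so that $\ell_{p,E}'(0,\chi)=\sum_{\substack{a=1\\ p\nmid a}}^{F}(-1)^a\chi(a)\,\partial_sH_p(s,a,F)|_{s=0}$. The whole computation therefore reduces to identifying the coefficient of $s$ in the expansion of $H_p(s,a,F)$ about $s=0$.

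That coefficient is exactly the quantity that already surfaced in the proof of Proposition \ref{Da-gamma}: expanding $H_p(s,a,F)$ in powers of $s$ and reading off the linear term gives $\partial_sH_p(s,a,F)|_{s=0}=\omega^{-1}(a)\left(\tfrac F2-a\right)(1+\log_p(F))-\omega^{-1}(a)F\,G_{p,E}\!\left(\tfrac aF\right)$. To keep the argument self-contained I would re-derive this straight from the series $H_p(s,a,F)=\langle a\rangle^{1-s}\sum_{k\ge0}\binom{1-s}{k}(F/a)^kE_k$: differentiating $\langle a\rangle^{1-s}$ brings down $\log_p\langle a\rangle=\log_p(a)$, while $\frac{d}{du}\binom uk\big|_{u=1}=\frac{(-1)^k}{k(k-1)}$ for $k\ge2$ (and $=1$ for $k=1$) turns the tail of the series into the sum defining $G_{p,E}$ in \eqref{Di-ft-Eu} after the reindexing $k\mapsto k-1$ and the observation that $E_k=0$ for even $k\ge2$. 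Writing $\log_p(a/F)=\log_p(a)-\log_p(F)$ and noting that the $\log_p(a)$ contributions cancel produces precisely the displayed coefficient.

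Substituting this back, using $\chi(a)\omega^{-1}(a)=\chi_1(a)$, and splitting the sum, I obtain $\ell_{p,E}'(0,\chi)=(1+\log_p(F))\sum_{\substack{a=1\\ p\nmid a}}^{F}(-1)^a\chi_1(a)\left(\tfrac F2-a\right)+F\sum_{\substack{a=1\\ p\nmid a}}^{F}(-1)^{a+1}\chi_1(a)G_{p,E}\!\left(\tfrac aF\right)$. The second sum is already the first term of the claimed identity, so it remains to recognize the first sum as $-(1+\log_p(F))\ell_{p,E}(0,\chi)$. For this I would invoke Theorem \ref{Iwa-e-ell} at $n=1$, giving $\ell_{p,E}(0,\chi)=\epsilon_{1,\chi}$, together with \eqref{ell-le} at $n=1$ and $E_1(x)=x-\tfrac12$, which yield $\epsilon_{1,\chi}=\sum_{\substack{a=1\\ p\nmid a}}^{F}(-1)^a\chi_1(a)\left(a-\tfrac F2\right)=-\sum_{\substack{a=1\\ p\nmid a}}^{F}(-1)^a\chi_1(a)\left(\tfrac F2-a\right)$. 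This turns the first sum into $-(1+\log_p(F))\ell_{p,E}(0,\chi)$ and gives the theorem.

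The main obstacle will be the careful extraction of the linear coefficient of $H_p(s,a,F)$: one must differentiate $\binom{1-s}{k}$ at $s=0$ and match the resulting series term by term against the definition of $G_{p,E}\!\left(\tfrac aF\right)$, keeping track of the index shift, the sign $(-1)^k$ (harmless since only odd $k\ge3$ survive), and the split of $\log_p(a/F)$. Once that coefficient is correctly in hand, the two remaining identifications are routine uses of the interpolation property of $\ell_{p,E}$ and of $E_1(x)=x-\tfrac12$.
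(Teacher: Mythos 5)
Your proposal is correct and follows essentially the same route as the paper: differentiate the finite sum in Theorem \ref{Wa-e-ell} term by term, extract the coefficient of $s$ in $H_p(s,a,F)$ from the expansions of $\langle a\rangle^{1-s}$ and $\binom{1-s}{k}$, rewrite the tail via \eqref{Di-ft-Eu} as $G_{p,E}(a/F)$ (with the $\log_p(a)$ terms cancelling, exactly as in the paper), and identify the leftover sum with $\ell_{p,E}(0,\chi)$. The only cosmetic difference is that you obtain $\sum_{p\nmid a}(-1)^a\chi_1(a)(a-\tfrac F2)=\ell_{p,E}(0,\chi)$ by citing Theorem \ref{Iwa-e-ell} and \eqref{ell-le} at $n=1$ with $E_1(x)=x-\tfrac12$, whereas the paper re-derives the same identity directly by splitting off the multiples of $p$ to get $(1-\chi_1(p)p)E_{1,\chi_1}$.
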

\begin{proof}
We have the expansions (cf. \cite{Fo}):
$$\begin{aligned}
&\langle a\rangle^{1-s}=\langle a\rangle(1-s\log_p\langle a\rangle+\cdots \\
&\binom{1-s}n=\frac{(-1)^{n+1}}{n(n-1)}s+\cdots,
\end{aligned}$$
provided $n\geq2.$ From these expansions and Theorem \ref{Wa-e-ell}, we find that the coefficient of $s$ in $\ell_{p,E}(s,\chi)$ is
$$\sum_{\substack{ a=1\\  p\nmid a}}^{F}(-1)^{a} \chi_1(a)
\left(\frac F2-\left(a-\frac F2\right)\log_p\langle a\rangle+F\sum_{n=1}^\infty\frac{1}{n(n+1)}
\left(\frac Fa\right)^{n}E_{n+1}\right).$$
Furthermore, from (\ref{Di-ft-Eu}), we have
$$G_{p,E}\left(\frac aF\right)=\left(\frac aF-\frac12\right)\log_p\left(\frac aF\right)-\frac aF-
\sum_{n=1}^\infty\frac{1}{n(n+1)}\left(\frac Fa\right)^nE_{n+1}.$$
Since the value of $\ell_{p,E}'(0,\chi)$ is the coefficient of $s$ in the expansion of $\ell_{p,E}(s,\chi)$ about $s=0,$
by evaluating the sum
$$\begin{aligned}
\sum_{\substack{ a=1\\  p\nmid a}}^{F}(-1)^{a} \chi_1(a)\left(a-\frac F2\right)
&=\sum_{a=1}^{F}(-1)^{a} \chi_1(a)\left(a-\frac F2\right) \\
&\quad-\chi_1(p)p\sum_{a=1}^{F/p}(-1)^{a} \chi_1(a)\left(a-\frac{\frac Fp}2\right) \\
&=(1-\chi_1(p)p)E_{1,\chi_1} \\
&=\ell_{p,E}(0,\chi),
\end{aligned}$$
we obtain the result.
\end{proof}

We can now obtain  the values of $p$-adic Euler $\ell$-functions at positives integers (cf. \cite{Di2}).

\begin{theorem}\label{h-der-ell}
Let $\chi$ be a primitive Dirichlet character with an odd conductor $f=f_\chi.$
Then
$$\ell_{p,E}(n,\chi_{n-1})=\frac{(-pf)^{-n+1}}{(n-2)!}\sum_{\substack{ a=1\\  p\nmid a}}^{fp}(-1)^{a}\chi(a)
\left(D^{n}G_{p,E}\right)\left(\frac a{pf}\right),\quad n\geq2.$$
In particular, $\ell_{p,E}(1,\chi)=(1-\chi(p))E_{0,\chi}.$
\end{theorem}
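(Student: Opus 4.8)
The plan is to reduce the value $\ell_{p,E}(n,\chi_{n-1})$ to the Washington-function representation of Theorem~\ref{Wa-e-ell} for the twisted character $\chi_{n-1}$, and then to recognize the power series inside $H_p(n,a,F)$ as a rescaling of the $n$-th derivative $D^nG_{p,E}$.

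First I would fix $n\geq 2$ and apply Theorem~\ref{Wa-e-ell} to $\chi_{n-1}$ with the explicit choice $F=pf$. This is legitimate: $pf$ is odd (both $p$ and $f$ are), it is divisible by $p$, and it is a multiple of the conductor of $\chi_{n-1}$, since $f_{\chi_{n-1}}$ divides $\mathrm{l.c.m.}(f,p)=pf$. This gives
$$\ell_{p,E}(n,\chi_{n-1})=\sum_{\substack{a=1\\ p\nmid a}}^{pf}(-1)^a\chi_{n-1}(a)\,H_p(n,a,pf),$$
where $H_p(n,a,pf)=\langle a\rangle^{1-n}\sum_{k=0}^\infty\binom{1-n}{k}\left(\frac{pf}{a}\right)^kE_k$. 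Note that for $p\nmid a$ one has $\left|\frac{a}{pf}\right|_p=p>1$, so $\frac{a}{pf}$ lies in the region $|x|_p>1$ where $G_{p,E}$ and all its derivatives are defined.

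The key step is the identity linking $H_p$ and $D^nG_{p,E}$. Evaluating formula~(\ref{h-Di-ft-Eu}) at $x=\frac{a}{pf}$ (here $m=n\geq2$, so the formula is available) and factoring out $\left(\frac{pf}{a}\right)^{n-1}$ exhibits precisely the series $\sum_{k}\binom{1-n}{k}\left(\frac{pf}{a}\right)^kE_k$ that appears in $H_p(n,a,pf)$. Solving for that series and substituting back shows that $H_p(n,a,pf)$ is a fixed scalar multiple, namely $\pm 1/(n-2)!$, of $\langle a\rangle^{1-n}\left(\frac{a}{pf}\right)^{n-1}\big(D^nG_{p,E}\big)\!\left(\frac{a}{pf}\right)$. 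Using $\langle a\rangle=\omega^{-1}(a)a$, the product $\langle a\rangle^{1-n}a^{n-1}$ collapses to $\omega^{n-1}(a)$, so the $a$-dependent prefactor becomes $\omega^{n-1}(a)/(pf)^{n-1}$.

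Finally I would absorb the Teichm\"uller twist into the character: since $\chi_{n-1}(a)=\chi(a)\omega^{-(n-1)}(a)$, the factor $\omega^{n-1}(a)$ cancels and $\chi_{n-1}(a)$ is replaced by $\chi(a)$ throughout the sum. Pulling the constant $\pm(pf)^{-(n-1)}/(n-2)!$ out front then yields the asserted formula. The hard part will be the careful bookkeeping of the overall sign and of the power $(pf)^{-(n-1)}$ as they pass through the Teichm\"uller character and the binomial prefactor of~(\ref{h-Di-ft-Eu}); the preceding reductions are essentially formal once the identity between $H_p$ and $D^nG_{p,E}$ is in place.

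The case $n=1$ must be treated separately, since~(\ref{h-Di-ft-Eu}) requires $m\geq2$ and the factor $(n-2)!$ is meaningless there. Here I would argue straight from the defining limit~(\ref{p-l-fts}): taking $s=1$ gives $\langle a\rangle^{1-s}=1$, so $\ell_{p,E}(1,\chi)=\lim_{N\to\infty}\sum_{\substack{a=1\\ p\nmid a}}^{fp^N}(-1)^a\chi(a)$, and Corollary~\ref{pro-ge-E-coro} with $n=0$ identifies this limit with $(1-\chi(p))E_{0,\chi}$, the stated special value.
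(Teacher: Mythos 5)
Your proposal is correct and lands on the same key identity as the paper, but it reaches it by a slightly different road. The paper's proof never invokes Theorem \ref{Wa-e-ell}: it works directly from the defining limit (\ref{p-l-fts}), uses $\chi_{n-1}(a)\langle a\rangle^{1-n}=\chi(a)a^{1-n}$ to strip the Teichm\"uller twist, splits the summation index as $a=a_0+pfb$ with $1\le a_0\le pf$, $p\nmid a_0$ (using that $pf$ is odd, so $(-1)^a=(-1)^{a_0}(-1)^b$), and then identifies the inner alternating limit via the intermediate formula (\ref{h-dir-gamma}), $D^{n}G_{p,E}(x)=(-1)^{n}(n-2)!\lim_{N\to\infty}\sum_{b=0}^{p^N-1}(-1)^b(x+b)^{-n+1}$, which it derives from (\ref{h-Di-ft-Eu}) together with (\ref{Eu-witt-n}) and the binomial expansion (\ref{ra-in-exp}). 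Your route through $H_p(n,a,pf)$ is equivalent --- Theorem \ref{Wa-e-ell} is itself just the analytic repackaging of (\ref{p-l-fts}) --- and it buys a small economy: you match the series in $H_p$ against (\ref{h-Di-ft-Eu}) directly and never need the limit form (\ref{h-dir-gamma}) or the index splitting, at the cost of justifying the application of Theorem \ref{Wa-e-ell} to $\chi_{n-1}$ with $F=pf$, which you do correctly (odd, divisible by $p$ and by $f_{\chi_{n-1}}\mid\mathrm{l.c.m.}(f,p)$). Your handling of $n=1$ via (\ref{p-l-fts}) at $s=1$ and Corollary \ref{pro-ge-E-coro} with $n=0$ is also right, and is more explicit than the paper, which merely asserts that special value.

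One caution on the point you deferred: the sign is not bookkeeping you can leave as a $\pm$. Carrying your own computation through, $H_p(n,a,pf)=\frac{\omega^{n-1}(a)}{(-1)^n(n-2)!\,(pf)^{n-1}}\left(D^{n}G_{p,E}\right)\left(\frac{a}{pf}\right)$, and since $(-1)^n(pf)^{1-n}=-(-pf)^{1-n}$, the sum comes out as $\frac{(-pf)^{-n+1}}{(n-2)!}\sum_{a}(-1)^{a+1}\chi(a)\left(D^{n}G_{p,E}\right)\left(\frac{a}{pf}\right)$, i.e.\ with $(-1)^{a+1}$ rather than the printed $(-1)^{a}$. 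The paper's own proof, pushed through with the same care, produces exactly the same constant, so the discrepancy lies in the printed statement (compare Theorem \ref{der-ell}, where the analogous sum does carry $(-1)^{a+1}$), not in your method; but your proposal asserts that the bookkeeping ``yields the asserted formula,'' and once you resolve the $\pm$ you should expect --- and state --- the $(-1)^{a+1}$.
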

\begin{proof}
Note that
\begin{equation}\label{ra-in-exp}
\frac1{(x+a)^n}=\sum_{k=0}^\infty\binom{-n}{k}x^{-n-k}a^k.
\end{equation}
Applying (\ref{Eu-witt-n}), (\ref{h-Di-ft-Eu}) and (\ref{ra-in-exp}), we have
\begin{equation}\label{h-dir-gamma}
D^{n}G_{p,E}(x)=(-1)^{n}(n-2)!\lim_{N\to\infty}\sum_{a=0}^{p^N-1}(-1)^a(x+a)^{-n+1},\quad n\geq2.
\end{equation}
Furthermore, if $n\geq2$ and $\chi$ is a primitive Dirichlet character with an odd conductor $f=f_\chi,$
then by (\ref{p-l-fts}), we obtain
$$\begin{aligned}
\ell_{p,E}(n,\chi_{n-1})
&=\lim_{N\to\infty}\sum_{\substack{a=1\\ p\nmid a}}^{fp^N}(-1)^a\chi(a)
\langle a\rangle^{-n+1} \\
&=\sum_{\substack{a=1\\ p\nmid a}}^{fp}(-1)^a\chi(a)
\lim_{N\to\infty}\sum_{b=0}^{p^{N-1}-1}(-1)^b(a+pfb)^{-n+1}.
\end{aligned}$$
We combine this formula with (\ref{h-dir-gamma}) to obtain our result.
\end{proof}

\section{Further remarks and observations}

The Euler measure is defined for each positive integer $n$ by
(see \cite[Section 4]{TK2})
\begin{equation}\label{Eu-mea}
\mu_{n,E}(a+p^N\Z)=(-1)^ap^{nN}E_n\left(\frac a{p^N}\right),
\end{equation}
where $E_n(x)$ is the Euler polynomials.
Note that $\mu_{n,E}(\Z)=E_n$ and $\mu_{n,E}(p\Z)=p^nE_n.$
We deduce that $\mu_{n,E}(\Z^\times)=(1-p^n)E_n.$
Let $UD(\mathbb Z_p)$ be the set of uniformly differential function on $\mathbb Z_p.$
When $f\in UD(\Z),$ (\ref{Eu-mea}) allows us to define
\begin{equation}\label{Eu-mea-int}
\int_{\Z}f(x)\mu_{n,E}(x)=\lim_{N\to\infty}\sum_{a=0}^{p^N-1}f(a)\mu_{n,E}(a+p^N\Z).
\end{equation}
Hence,
the Euler measures are all related to ``fermionic'' measure $\mu_{-1}(a+p^N\Z)=(-1)^a$ by the property
\begin{equation}\label{Eu-mea-Haar}
\mu_{n,E}(\Z)=\int_{\Z}d\mu_{n,E}(x)=\int_{\Z}x^nd\mu_{-1}(x)=E_n;
\end{equation}
\begin{equation}\label{Eu-pol-mea-Haar}
\sum_{i=0}^n\binom nix^{n-i}\int_{\Z}d\mu_{i,E}(y)=\int_{\Z}(x+y)^nd\mu_{-1}(y)=E_n(x).
\end{equation}
(see \cite{MSK,TK1,TK2}).
This is analogous to the relation between $dx^n$ and $dx$ for $n$ a non-negative integer.
Putting these observations together, we have
\begin{equation}\label{Eu-mea-Mazur}
\int_{\Z^\times}x^nd\mu_{-1}(x)=\mu_{n,E}(\Z^\times)=(1-p^n)E_n,
\end{equation}
where $\Z^\times=\Z\setminus p\Z.$
As we saw in (\ref{z-ft-val}), the right hand side can be interpreted as
\begin{equation}\label{Eu-mea-ell}
(1-p^n)\zeta_{E}(-n)
\end{equation}
which can be extended to give the $p$-adic $\zeta_E$-function (see Section \ref{con-eu-ell}).
Thus, (\ref{Eu-mea-Mazur}) should be viewed
as expressing the $p$-adic $\zeta_E$-function as a kind of Mellin transform (cf. \cite{Iwa,Wa}).
In fact, (\ref{Eu-mea-Mazur}) immediately gives the $p$-adic continuation of the $\zeta_E$-function,
as well as the Kummer congruences. Indeed,
let $m\equiv n\pmod{(p-1)p^N}$ and $p\nmid x.$ From the little Fermat theorem it follows that $x^m\equiv
x^{n}\pmod{p^{N+1}}.$
We have
$$|x^{m}-x^{n}|_p<\frac1{p^{N+1}}\quad\text{for }x\in\Z^\times.$$
Thus
$$\biggl|\int_{\Z^\times}x^md\mu_{-1}(x)-\int_{\Z^\times}x^{n}d\mu_{-1}(x)\biggl|_p\leq\frac1{p^{N+1}}$$
(cf. \cite{KS2}).
We have therefore proved the following theorem (see Corollary \ref{Iwa-e-ell-co-2}).

\begin{proposition}\label{kummer}
If $(p-1)\nmid n$ and $m\equiv n\pmod{(p-1)p^N},$ then
$$(1-p^m)E_m\equiv (1-p^{n})E_{n}\pmod{p^{N+1}}.$$
\end{proposition}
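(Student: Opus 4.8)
The plan is to express both sides of the congruence through a single $p$-adic integral against the fermionic measure $\mu_{-1}$ and thereby reduce the statement to a pointwise estimate. The crucial input is the Mellin-type identity \eqref{Eu-mea-Mazur},
$$\int_{\Z^\times}x^n\,d\mu_{-1}(x)=(1-p^n)E_n,$$
which exhibits $(1-p^n)E_n$ as an integral over $\Z^\times=\Z\setminus p\Z.$ Writing this with exponents $m$ and $n$ and subtracting gives
$$(1-p^m)E_m-(1-p^n)E_n=\int_{\Z^\times}\left(x^m-x^n\right)d\mu_{-1}(x),$$
so the whole problem collapses to bounding the $p$-adic absolute value of this last integral.

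First I would obtain the pointwise bound on the integrand. For $x\in\Z^\times$ the element $x$ is a $p$-adic unit, so Euler's theorem yields $x^{(p-1)p^N}\equiv1\pmod{p^{N+1}}$; since $m\equiv n\pmod{(p-1)p^N},$ writing $m=n+k(p-1)p^N$ gives $x^m=x^n\bigl(x^{(p-1)p^N}\bigr)^k\equiv x^n\pmod{p^{N+1}},$ and hence $\abs{x^m-x^n}_p\leq p^{-(N+1)}$ uniformly for $x\in\Z^\times.$ I would then transfer this to the integral: by the definition of $\mu_{-1}$ the integral is the limit of the finite sums $\sum_{p\nmid a}\bigl(a^m-a^n\bigr)(-1)^a,$ in which every weight $\mu_{-1}(a+p^N\Z)=(-1)^a$ is a $p$-adic unit. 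The strong triangle inequality then bounds each partial sum, and so its limit, by $\sup_{x\in\Z^\times}\abs{x^m-x^n}_p\leq p^{-(N+1)}.$ Combining with the displayed identity gives $\abs{(1-p^m)E_m-(1-p^n)E_n}_p\leq p^{-(N+1)},$ which is precisely the asserted congruence modulo $p^{N+1}.$

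The only genuinely delicate step is the passage from the pointwise bound to the bound on the integral, and this is exactly where the fermionic nature of $\mu_{-1}$ matters: because its values $\pm1$ are units, integration against it does not enlarge the $p$-adic absolute value, i.e.\ $\abs{\int_{\Z^\times}f\,d\mu_{-1}}_p\leq\sup_{x}\abs{f(x)}_p.$ It is this estimate that converts the elementary congruence $x^m\equiv x^n$ into the arithmetic congruence for $(1-p^m)E_m.$ The hypothesis $(p-1)\nmid n$ is not used in the estimate; it is carried over from the classical Bernoulli-number version of Kummer's congruence, and the argument above in fact establishes the conclusion for every pair with $m\equiv n\pmod{(p-1)p^N}.$
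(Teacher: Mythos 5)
Your proof is correct and takes essentially the same route as the paper: both start from the Mellin-type identity \eqref{Eu-mea-Mazur}, deduce $x^m\equiv x^n\pmod{p^{N+1}}$ for $x\in\Z^\times$ from the Fermat--Euler theorem, and conclude via the sup-norm estimate $\abs{\int_{\Z^\times}f\,d\mu_{-1}}_p\leq\sup_x\abs{f(x)}_p$, which is legitimate since $\mu_{-1}$ is a bounded measure with unit values. Your closing observation is also accurate: the paper's argument likewise never uses the hypothesis $(p-1)\nmid n$, which is simply inherited from the Bernoulli-number form of Kummer's congruence.
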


\begin{corollary}\label{kummer2}
If $s$ is a non-negative integer and $N,n$ are positive integers respectively such that $(n,p)=1,N>0,$ then
$$E_{np^N+s}\equiv E_{np^{N-1}+s}\pmod{p^{N}}.$$
\end{corollary}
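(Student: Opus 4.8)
The plan is to derive Corollary \ref{kummer2} directly from the Kummer-type congruence of Proposition \ref{kummer}, treating the two parities of the exponent separately. First I would abbreviate $\mu=np^{N}+s$ and $\nu=np^{N-1}+s$ and record that
$$\mu-\nu=np^{N-1}(p-1),$$
so that $\mu\equiv\nu\pmod{(p-1)p^{N-1}}$; the hypothesis $(n,p)=1$ merely guarantees that the $p$-adic valuation of $\mu-\nu$ equals $N-1$ exactly, so that we are comparing Euler numbers at consecutive levels. This places $(\mu,\nu)$ in the setting of Proposition \ref{kummer} with its modulus $p^{(N-1)+1}=p^{N}$, which would give
$$(1-p^{\mu})E_{\mu}\equiv(1-p^{\nu})E_{\nu}\pmod{p^{N}},$$
once the hypothesis $(p-1)\nmid\nu$ is checked.

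Next I would remove the factors $1-p^{\mu}$ and $1-p^{\nu}$. Since $p$ is odd we have $\nu=np^{N-1}+s\geq p^{N-1}\geq N$, and likewise $\mu\geq\nu\geq N$; as the Euler numbers are $p$-adic integers (their denominators being powers of $2$, hence $p$-adic units), both $p^{\mu}E_{\mu}$ and $p^{\nu}E_{\nu}$ lie in $p^{N}\Z$. Consequently $(1-p^{\mu})E_{\mu}\equiv E_{\mu}$ and $(1-p^{\nu})E_{\nu}\equiv E_{\nu}\pmod{p^{N}}$, and the displayed congruence collapses to the desired $E_{\mu}\equiv E_{\nu}\pmod{p^{N}}$.

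The step I expect to need the most care is justifying the hypothesis $(p-1)\nmid\nu$, and here the vanishing $E_{2k}=0$ for $k\geq1$ resolves everything. Because $p$ is odd, $p-1$ is even, so $(p-1)\mid\nu$ would force $\nu$ to be even; moreover $\mu$ and $\nu$ have the same parity since $\mu-\nu=np^{N-1}(p-1)$ is even. Thus I would split into two cases. If $\nu$ is odd, then $(p-1)\nmid\nu$ holds automatically and the argument of the first two paragraphs applies. If $\nu$ is even, then both $\mu$ and $\nu$ are even and at least $2$ (as $\nu\geq p^{N-1}\geq1$ rules out $\nu=0$), whence $E_{\mu}=E_{\nu}=0$ and the congruence is trivial. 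Combining the two cases yields $E_{np^{N}+s}\equiv E_{np^{N-1}+s}\pmod{p^{N}}$ for all admissible $n,N,s$.
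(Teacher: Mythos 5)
Your proof is correct, but it takes a genuinely different route from the paper's. You deduce the corollary from Proposition \ref{kummer}: setting $\mu=np^N+s$, $\nu=np^{N-1}+s$, you note $\mu\equiv\nu\pmod{(p-1)p^{N-1}}$, strip the factors $1-p^{\mu}$ and $1-p^{\nu}$ using $\mu\geq\nu\geq p^{N-1}\geq N$ together with the $p$-integrality of the Euler numbers (Proposition \ref{von-Euler}), and dispose of the hypothesis $(p-1)\nmid\nu$ by a parity split, the even case being trivial because $E_{2k}=0$ for $k\geq1$ forces $E_{\mu}=E_{\nu}=0$ there. All of these steps check out. The paper argues directly instead: since $E_m=\int_{\mathbb{Z}_p}x^m\,d\mu_{-1}(x)$, it writes
$$E_{np^N+s}-E_{np^{N-1}+s}=\int_{\mathbb{Z}_p}x^{s}\left[(x^n)^{p^N}-(x^n)^{p^{N-1}}\right]d\mu_{-1}(x)$$
over \emph{all} of $\mathbb{Z}_p$ (not $\mathbb{Z}_p^{\times}$), and invokes $\sup_{x\in\mathbb{Z}_p}|x^{p^N}-x^{p^{N-1}}|_p\leq p^{-N}$ (valid for every $x$, including $p\mid x$, again because $p^{N-1}\geq N$) together with the boundedness of the fermionic measure; no Euler factors ever appear, no parity analysis is needed, and in fact the hypothesis $(n,p)=1$ plays no essential role in that estimate. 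Your derivation buys an explicit logical reduction of the corollary to the stated Kummer-type congruence, at the cost of the case split and of applying Proposition \ref{kummer} with its $N$ equal to $0$ (modulus $p-1$, conclusion mod $p$) when the corollary's $N$ is $1$ --- that borderline case does hold, by Fermat's little theorem run through the same integral argument, but it sits at the edge of the proposition as stated and deserves the one-line justification. The paper's integral estimate is shorter and more uniform; yours makes visible that the corollary is formally a consequence of the Kummer congruence plus the vanishing of the even-index Euler numbers.
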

\begin{proof}
For $x\in\Z,$ we have $x^p\equiv x\pmod{p}$ (Fermat's little theorem).
We see by induction that
$x^{p^N}\equiv x^{p^{N-1}}\equiv{p^N},$
and $\sup_{x\in\Z}|x^{p^N}-x^{p^{N-1}}|_p\leq p^{-N}$ for $N\geq1$ (see \cite[Lemma 2]{Ma}).
Let $(r,p)=1$ and let $s\geq0.$ Then we obtain
$$\biggl|\int_{\Z}x^{s}\left[(x^r)^{p^N}-(x^r)^{p^{N-1}}\right]d\mu_{-1}(x)\biggl|_p \\
\leq p^{-N},$$
which yields the result.
\end{proof}

From (\ref{Eu-mea-Mazur}), we can see that
\begin{equation}\label{von}
|E_n|_p=|1/(1-p^n)|_p\biggl|\int_{\Z^\times}x^nd\mu_{-1}(x)\biggl|_p\leq1,
\end{equation}
because the factor before the integral is coprime to $p$ and therefore their $p$-adic absolute values are 1,
and $|\mu_{-1}(U)|_p\leq1$ for all compact open subsets $U\subset\Z^\times.$
Then we obtain

\begin{proposition}\label{von-Euler}
If $n\geq0,$ then $|E_n|_p\leq1.$
\end{proposition}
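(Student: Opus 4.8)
The plan is to deduce Proposition \ref{von-Euler} directly from the Mazur-style integral representation \eqref{Eu-mea-Mazur}, exactly as foreshadowed in the discussion immediately preceding the statement. The key identity is
$$
(1-p^n)E_n=\int_{\Z^\times}x^n\,d\mu_{-1}(x),
$$
so that $E_n=\dfrac{1}{1-p^n}\int_{\Z^\times}x^n\,d\mu_{-1}(x)$, and the task reduces to bounding each of the two factors $p$-adically.

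First I would handle the prefactor. Since $n\geq0$ and $p$ is prime, $p^n\equiv0\pmod p$ when $n\geq1$ and $p^0=1$ when $n=0$; in either case $1-p^n$ is a $p$-adic unit (for $n\geq1$ because $1-p^n\equiv1\pmod p$, and for $n=0$ because $1-1=0$ must be treated separately — here one simply notes $E_0=1$ so $|E_0|_p=1\leq1$ directly). Thus for $n\geq1$ we have $|1/(1-p^n)|_p=1$. Next I would bound the integral. The fermionic measure $\mu_{-1}$ satisfies $\mu_{-1}(a+p^N\Z)=(-1)^a$, so $|\mu_{-1}(U)|_p\leq1$ for every compact open $U\subset\Z^\times$; combined with $|x^n|_p\leq1$ for $x\in\Z^\times$, the ultrametric bound on the defining limit \eqref{Eu-mea-int} gives
$$
\biggl|\int_{\Z^\times}x^n\,d\mu_{-1}(x)\biggr|_p\leq\max_{x\in\Z^\times}|x^n|_p\cdot\max_U|\mu_{-1}(U)|_p\leq1.
$$

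Putting the two bounds together through the multiplicativity of $|\cdot|_p$ yields $|E_n|_p=|1/(1-p^n)|_p\cdot\bigl|\int_{\Z^\times}x^n\,d\mu_{-1}(x)\bigr|_p\leq1$, which is precisely the displayed chain \eqref{von}. This is really a one-line consequence once \eqref{Eu-mea-Mazur} and the unit property of $1-p^n$ are in hand, so the proof is essentially just citing \eqref{von} and concluding.

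The only genuine subtlety — and what I expect to be the main point requiring care — is the degenerate case $n=0$, where $1-p^0=0$ and the integral representation \eqref{Eu-mea-Mazur} degenerates; here one must argue separately, simply observing $E_0=1$ and hence $|E_0|_p=1\leq1$. For all $n\geq1$ the argument above is uniform and clean, relying only on the elementary facts that $1-p^n$ is coprime to $p$ and that the fermionic measure is bounded, both of which are already recorded in the text.
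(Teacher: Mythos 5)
Your proof is correct and takes essentially the same approach as the paper: equation \eqref{von} is exactly your computation, inverting the factor $1-p^n$ (a $p$-adic unit) and bounding the fermionic integral over $\Z^\times$ by $1$ via $|\mu_{-1}(U)|_p\leq1$. Your separate treatment of $n=0$ is a welcome extra precaution --- the paper's own one-line argument silently assumes $1-p^n$ is coprime to $p$, which fails for $n=0$ where one must instead note $E_0=1$ directly, just as you do.
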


Let $d$ be a fixed positive integer.
Let $X=\varprojlim_N(\mathbb Z/dp^N\mathbb Z),$ where the map from
$\mathbb Z/dp^M\mathbb Z$ to $\mathbb Z/dp^N\mathbb Z$ for $M\geq N$
is a reduction mod \,$dp^N.$
Let $a+dp^N\mathbb Z_p=\{x\in X\mid x\equiv a\pmod{dp^N}\}$ and let
\begin{equation}\label{X^*}
X^*=\bigcup_{\substack{0<a<dp\\ (a,p)=1}} (a+dp^N\mathbb Z_p).
\end{equation}
The generalized Euler numbers $E_{n,\chi}$ can be represented by using fermionic expression of $p$-adic integral on $\Z$
as follows \cite{TK2}:
\begin{equation}\label{int-en-ep}
\begin{aligned}
\int_{X}\chi(x)d\mu_{n,E}(x)&=\int_{X}\chi(x)x^nd\mu_{-1}(x)=E_{n,\chi};\\
\int_{pX}\chi(x)d\mu_{n,E}(x)&=\int_{pX}\chi(x)x^nd\mu_{-1}(x)=p^n\chi(p)E_{n,\chi};\\
\int_{X^*}\chi(x)d\mu_{n,E}(x)&=\int_{X\setminus pX}\chi(x)x^nd\mu_{-1}(x)=(1-p^n\chi(p))E_{n,\chi}.
\end{aligned}
\end{equation}
Using (\ref{Eu-pol-mea-Haar}) and (\ref{int-en-ep}), it is not difficult to show that
\begin{equation}\label{dis-rel}
\begin{aligned}
E_{n,\chi}&=\int_{X}\chi(x)x^n d\mu_{-1}(x) \\
&=f^{n-1}\sum_{a=1}^{f}(-1)^a\chi(a)\int_{\mathbb Z_p}\left(\frac {a}f+x\right)^nd\mu_{-1}(x) \\
&=f^{n-1}\sum_{a=1}^{f}(-1)^a\chi(a)\sum_{i=0}^n\binom ni\left(\frac {a}f\right)^{n-i}\int_{\mathbb Z_p}x^id\mu_{-1}(x) \\
&=f^{n-1}\sum_{a=1}^{f}(-1)^a\chi(a)E_n\left(\frac {a}f\right),
\end{aligned}
\end{equation}
since $E_n(x)=\sum_{i=0}^n\binom ni x^{n-i} E_i$ (see (\ref{l-e-n})).
From (\ref{p-l-fts}) and (\ref{int-en-ep}),
the $p$-adic Euler $\ell$-function for a Dirichlet character $\chi$ can be defined by setting
\begin{equation}\label{int-ell-ft}
\ell_{p,E}(s,\chi)=\int_{X^*}\chi(x)\langle x\rangle^{1-s}d\mu_{-1}(x)\quad\text{for }s\in \mathbb Z_p.
\end{equation}
From (\ref{int-en-ep}), we have the following
\begin{equation}\label{dis-rel-val}
\begin{aligned}
\ell_{p,E}(1-n,\chi)&=\int_{X^*}\langle x\rangle^n\chi(x)d\mu_{-1}(x) \\
&=\int_{X^*}x^n\chi_n(x)d\mu_{-1}(x) \\
&=(1-p^n\chi_n(p))E_{n,\chi_n}.
\end{aligned}
\end{equation}
Using the above expression for $\ell_E(-n,\chi)=E_{n,\chi},n\geq0,$ we conclude that
$$\ell_{p,E}(1-n,\chi)=(1-p^n\chi_n(p))\ell_E(-n,\chi_n).$$
This identity can be used to prove the following theorem (see Theorem \ref{Iwa-e-ell}).

\begin{theorem}
There exists a unique $p$-adic continuous function $\ell_{p,E}(s,\chi),s\in\mathbb Z_p,$
such that $\ell_{p,E}(1-n,\chi)=(1-p^n\chi_n(p))\ell_E(-n,\chi_n)$ for $n\geq1.$
\end{theorem}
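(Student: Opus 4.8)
The plan is to establish existence and uniqueness separately, leveraging the $p$-adic integral representation already developed. For existence, I would take the function $\ell_{p,E}(s,\chi)$ defined in (\ref{int-ell-ft}) as
$$\ell_{p,E}(s,\chi)=\int_{X^*}\chi(x)\langle x\rangle^{1-s}\,d\mu_{-1}(x),\quad s\in\mathbb Z_p,$$
and verify two things: that it is continuous in $s$, and that it takes the asserted interpolation values. The interpolation property is essentially already done: the chain of equalities in (\ref{dis-rel-val}) shows $\ell_{p,E}(1-n,\chi)=(1-p^n\chi_n(p))E_{n,\chi_n}$, and combining this with $\ell_E(-n,\chi_n)=E_{n,\chi_n}$ from (\ref{ell-ft-val}) gives exactly $\ell_{p,E}(1-n,\chi)=(1-p^n\chi_n(p))\ell_E(-n,\chi_n)$ for $n\geq1$.

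For continuity, I would argue that for $x\in X^*$ we have $\langle x\rangle\in1+p\mathbb Z_p\subset\mathbb Z_p^\times$, so $\langle x\rangle^{1-s}=\exp\bigl((1-s)\log_p\langle x\rangle\bigr)$ is well defined and, crucially, depends continuously on $s\in\mathbb Z_p$ since $\log_p\langle x\rangle\in p\mathbb Z_p$ forces the exponential to converge uniformly. Because $\mu_{-1}$ is a bounded measure ($|\mu_{-1}(U)|_p\leq1$ for compact open $U$, as noted before Proposition~\ref{von-Euler}) and $\chi$ is bounded, the integrand is bounded uniformly in $s$ and varies continuously with $s$; hence the integral $\ell_{p,E}(s,\chi)$ is a continuous function of $s$ on $\mathbb Z_p$. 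Alternatively, one can invoke the already-constructed analytic function of Theorem~\ref{Iwa-e-ell}, whose restriction to $\mathbb Z_p$ is continuous and whose values at $s=1-n$ match by (\ref{p-l-n-values}) together with (\ref{dis-rel-val}).

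For uniqueness, I would appeal to the density of the interpolation nodes. The points $s=1-n$ for $n\geq1$, i.e. the non-positive integers, are dense in $\mathbb Z_p$. If $g(s)$ is any other continuous function on $\mathbb Z_p$ with $g(1-n)=(1-p^n\chi_n(p))\ell_E(-n,\chi_n)$ for $n\geq1$, then $g$ and $\ell_{p,E}(\cdot,\chi)$ agree on a dense subset of $\mathbb Z_p$; two continuous functions on a topological space that agree on a dense set and take values in a Hausdorff space must coincide everywhere. Hence $g=\ell_{p,E}(\cdot,\chi)$, giving uniqueness.

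The main obstacle, as I see it, is not the formal density argument but making the continuity claim airtight: one must confirm that $\chi_n(p)$ and the implicit dependence of $\chi_n$ on $n$ do not spoil continuity, and that the map $s\mapsto\langle x\rangle^{1-s}$ is genuinely continuous uniformly over $x\in X^*$ so that it commutes with the $\mu_{-1}$-integral. Here the decisive point is that $\langle x\rangle\equiv1\pmod{p\mathbb Z_p}$ uniformly in $x$, which bounds $|\log_p\langle x\rangle|_p\leq|p|_p$ independently of $x$ and guarantees the required uniformity; the twist by $\chi_n$ is absorbed because $\langle x\rangle^{1-s}$ already encodes the Teichm\"uller part through (\ref{ome}), as the equality $\langle x\rangle^n\chi(x)=x^n\chi_n(x)$ in (\ref{dis-rel-val}) makes explicit. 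Once this uniformity is recorded, both existence and uniqueness follow immediately from the preceding material.
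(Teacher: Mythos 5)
Your proposal is correct and follows essentially the same route as the paper: the paper likewise defines $\ell_{p,E}(s,\chi)$ by the fermionic integral (\ref{int-ell-ft}), obtains the interpolation property from the computation (\ref{dis-rel-val}) combined with $\ell_E(-n,\chi_n)=E_{n,\chi_n}$, and relies on the earlier construction (Theorem \ref{Iwa-e-ell}) together with density of the nodes for uniqueness. Your write-up merely makes explicit the continuity estimate (uniform bound $|\log_p\langle x\rangle|_p\leq|p|_p$ over $X^*$) and the dense-set uniqueness argument that the paper leaves implicit, which is a welcome but not substantively different elaboration.
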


\end{document}